\documentclass[10pt]{article}

\usepackage{amssymb}
\usepackage{amsfonts}
\usepackage{amsmath}
\usepackage{amsthm}
\usepackage{hyperref}
\usepackage{xcolor}
\usepackage{yfonts}
\usepackage{graphicx,epstopdf}
\usepackage{tikz}

\usepackage{tikz-3dplot}

\usepackage{lscape}

\usepackage{xcolor}
\usepackage{yfonts}
\usepackage{hyperref}
\usepackage{float}
\usepackage{enumitem}

\usetikzlibrary{shapes.geometric, calc}
\usetikzlibrary{decorations.markings,shapes.arrows, shapes.symbols}
\usetikzlibrary{decorations.pathreplacing}

\newtheorem{theorem}{Theorem}[section]
\newtheorem{proposition}[theorem]{Proposition}
\newtheorem{lemma}[theorem]{Lemma}
\newtheorem{corollary}[theorem]{Corollary}

\theoremstyle{definition}
\newtheorem{definition}[theorem]{Definition}
\newtheorem{example}[theorem]{Example}
\newtheorem{remark}[theorem]{Remark}

\renewenvironment{proof}[1][Proof]{\textbf{#1.} }{\ \rule{0.5em}{0.5em}}

\newcommand{\ceil}[1]{\left \lceil{#1}\right \rceil}
\newcommand{\PG}{P}

\begin{document}

\author{Yuri Muranov \\
University  of  Warmia and \\
Mazury in Olsztyn,\\
Poland \\
\and 
Anna Muranova \\
University  of  Warmia and \\
Mazury in Olsztyn,\\
Poland \\
}
\title{Burnings of trees and their homologies}
\date{\today}
\maketitle

\begin{abstract}The problem of graph burning was firstly introduced as a  model for different processes of social and network interactions.  Recently, the authors of the  present paper developed   methods of algebraic topology for investigation of this problem.  This approach is  based on the new definition of burning process which excludes the possibility to choose at any moment vertex for burning from the set of vertices which are already burned at this moment.

In this paper we continue to study such burning process using algebraic topology methods.  We prove the result about relations between burnings of a graph and burnings of its  spanning  trees that is similar to the classical case.  Afterwards,  we describe properties of trees burnings. In particular,   we prove that a burning of a tree defines a structure of a digraph on the tree and investigate this structure. We introduce and study  a  strong burning  configuration space of a graph and new strong  burning homology  which are similar to burning homology defined in  our previous paper, but arise from burning homomorphism.
\end{abstract}

{\bf Keywords:} \emph{homology of graphs, simplicial homology, simplicial complex, graph burning,  digraph, path graph,  burning time,  tree, spanning tree,   burning homomorphism.}
\bigskip

AMS Mathematics Subject Classification 2020:  55N35, 18G85, 18N50, 
 94C15, 68R10,  05C25, 05C90, 05C38, 05C20, 
 05C05.


\section{Introduction}\label{S1}
\setcounter{equation}{0}

The problem of graph burning was firstly introduced in \cite{Burning_2014}. This model describes different processes of social and network interactions, see also   \cite{Bonato},   \cite{Bonato_0}.  The significance of trees in the burning theory follows from the 
results \cite[Theorem 2.4, Corollary 2.5]{Bonato_0}  that describe relations between   burning  of a connected graph $G$ and  burnings of some  of its  spanning trees.  In particular, as follows from this result, any burning of $G$ gives a burning of a spanning tree, although a spanning tree is not uniquely determined by the burning in general.

The definition of a burning process  in \cite{Burning_2014, Bonato_0}  allows that in  time $t$ a  vertex which is burned in time $t+1$ can be also chosen to burn  in time $t+1$ (see \cite{MMBurning2025} for details). In  \cite{MMBurning2025} we introduced several filtrations of a graph, closely related to the burning process, and defined 
the burning process which excludes the possibility to choose at the moment $t+1$ vertex for burning from the set of vertices which are already burned at this moment.

	 In this paper we continue to study burning process in the sense  \cite{MMBurning2025}. We start with preliminaries in Section \ref{S2}. In Section \ref{S3} we prove the result about relations between burnings of a graph and burnings of its  spanning  trees that is similar to \cite[Theorem 2.4, Corollary  2.5]{Bonato_0}.  The main result in this direction is given by Theorem \ref{thm::spanning tree}.  But there is an essential difference between our proof and the proof of  \cite[Theorem 2.4]{Bonato_0} as we note in Remark \ref{Difference}. Therefore, Theorem~\ref{thm::spanning tree} reduces investigation of   graph burnings to  burnings of trees. In Section \ref{S4} we define burning homomorphisms and discuss their existence on trees. In Section~\ref{S5} we describe possible end time of trees burnings and properties of burning sequences in the case of burning homomorphisms. In particular, we obtain  sharp lower and upper bounds of the burning homomorphism's end time for path graphs $P_n,n\in \Bbb N$, Theorem \ref{thm::bounds}. In Section \ref{S6}  we prove that a burning of a tree defines a structure of a digraph on the tree and investigate properties of this structure. We finish with Section \ref{S7}, where we introduce and study  a  strong burning  configuration space of a graph and a strong homology  which are similar to burning homology defined in   \cite{MMBurning2025}.

\section{Preliminaries}\label{S2}
\setcounter{equation}{0}
\subsection{Basic notions of the graph theory}
In this section, we recall basic notions of  the graph theory, see e. g. \cite{Gary, MiHomotopy, Mi3, MiHH}.

\begin{definition}\label{d2.1} 
 \emph{A graph} $G=(V_G,E_G)$ is a non-empty finite set $V_G$ of  \emph{vertices} together with a set $E_G$ of non-ordered pairs $\{v,w\}\in E_G$ of distinct vertices $v,w\in V_G$  called \emph{edges}. Two different  vertices are \emph{adjacent} if they are connected by an edge. A vertex is \emph{incident} with an edge 
 if the vertex is one of the endpoints of that edge. All vertices which are incident to vertex $v$ are called \emph{neighbors} of $v$. 

\end{definition}

Sometimes we will omit 
the subscript $G$ from the set  $V_G$ of vertices and from the set $E_G$ of edges if the graph is clear from the context.

\begin{definition}\label{d2.2} 
We say that a  graph  $H$ is a  \emph{subgraph} of a graph $G$  and  we write $H\subset G$ if  $V_H\subset V_G$ and $E_H\subset E_G$. A subgraph 
$H\subset G$ is an \emph{induced subgraph} if for any edge $\{v,w\}\in E_G$,  such that $v, w\in V_H$ we have $\{v,w\}\in E_H$. In this case we write 
$H\sqsubset G$.  For a subgraph $H\subset G$ we denote by $\widehat{H}\sqsubset G$ the induced subgraph which has the same set of vertices as $H$. 
\end{definition}

\begin{definition} \label{d2.3} A \emph{graph map $f\colon G\to H$}  from a graph
$G$\emph{\ }to a graph $H$ is a map of vertices $f|_{V_G}\colon
V_{G}\rightarrow V_{H}$ such that for any edge   $\{v,w\}\in E_G$,  we have   
$\{f(v),f(w)\}\in E_H$  or $f(v) =f(w)\in V_H$. The map
$f$ is called a \emph{homomorphism}  if $\{f(v),f(w)\}
\in E_H$ for any $\{v,w\}\in E_G$. For a graph $G$,  we  denote by $\operatorname{Id}_G\colon G\to G$ the \emph{identity} map (homomorphism)  that is the identity map on the set of vertices and  on the set  of edges. 
\end{definition}

\begin{definition}\label{d2.4} 
(i) For $n\geq 1$,  a \emph{path in a graph} $G=(V,E)$ is an alternating sequence $v_0, a_1, v_1, a_2, \dots , v_{n-1}, a_n, v_n$ of vertices 
$v_i\in V$ and edges $a_i\in E$ such that $a_i=\{v_{i-1}, v_i\}$ for $i=1, \dots, n$ and $v_i\ne v_j$ for all distinct $i,j=0, \dots, n$.  
For $n=0$,  a path is given by a vertex $v_0\in V$. The integer $n$ is the \emph{length} of the path.  The vertex $v_0$ is  the \emph{origin}  and the vertex $v_n$ is the \emph{end} of the path. 

(ii) For $n\geq 3$,  a \emph{circuit} in a graph $G=(V,E)$ is an alternating sequence $v_0, a_1, v_1, a_2, \dots , v_{n-1}, a_n, v_0=v_n$ of vertices 
$v_i\in V$ and edges $a_i\in E$ such that $a_i=\{v_{i-1}, v_i\}$ for $i=1, \dots, n$ and $v_i\ne v_j$ for all distinct $i,j=1, \dots, n$.  For $n<3$,  a circuit is not defined.
\end{definition} 

\begin{definition}\label{d2.5} 
(i) A  graph  $G=(V,E)$ is \emph{connected}
if for any two distinct vertices $v, w\in V$ there is a path  for which $v_0=v$ is the  origin and $v_n=w$ is the end of the path.

(ii) A  connected graph $G$ is a \emph{tree} if it is does not  contain  circuits.  

(iii)
A \emph{path graph} $P_n$, where $n$ is a positive integer, is a graph with 
the set of vertices $\{1,\dots, n\}$ and with the set of edges
$\{i,i+1\}$ for $i=1, \dots, n-1$.

\end{definition}

\begin{definition}\label{d::subtrees} 
Let  $G=(V,E)$ be a graph.

(i)  A subgraph $T$ of $G$ is its  {\em subtree} if it is a tree.

(ii) A subtree $T=(V_T, E_T)$  of $G$ is its  {\em spanning tree} if $V_T=V$.

(iii) A tree $Q$ is {\em an ambient tree} of $G$, if $G$ is a subtree of $Q$.

\end{definition} 

The following lemma is well known and follows almost immediately from definitions above.
\begin{lemma}\label{l:onePath}
A graph $T=(V_T, E_T)$ is a tree if and only if for any two distinct vertices $v,w\in V_T$ there exists exactly one path $v = v_0, a_1, v_1, a_2, \dots , v_n =~w$.
\end{lemma}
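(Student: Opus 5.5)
We prove the two implications separately, working only with Definitions \ref{d2.4} and \ref{d2.5}. In both directions we use a walk-to-path reduction: any walk from $v$ to $w$ contains a path from $v$ to $w$. To see this, take a shortest walk from $v$ to $w$. If some vertex appeared twice in it, we could cut out the closed subwalk between the two occurrences and get a shorter walk, so a shortest walk has no repeated vertices and is a path.

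\emph{Tree $\Rightarrow$ unique path.} Existence is immediate from connectedness (Definition \ref{d2.5}(i)). For uniqueness, suppose $P=(v_0,\dots,v_n)$ and $Q=(u_0,\dots,u_m)$ are two distinct paths from $v$ to $w$. In a graph without multiple edges, a path is determined by its vertex sequence, so the two sequences differ.
\begin{enumerate}[label=(\alpha*)]
\item Let $i$ be the first index with $v_{i+1}\neq u_{i+1}$. Such an $i$ exists: if one sequence were a proper prefix of the other, the shorter path would end at $w$ while the longer one revisits $w$ later, which is impossible.
\item Let $j>i$ be the smallest index such that $v_j$ lies on $Q$ after position $i$, and say $v_j=u_k$ with $k>i$. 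Such a $j$ exists because $v_n=w=u_m$.
\item Form the closed walk $v_i,v_{i+1},\dots,v_j=u_k,u_{k-1},\dots,u_{i+1},u_i=v_i$.
\end{enumerate}
By minimality of $j$, the inner vertices $v_{i+1},\dots,v_{j-1}$ do not lie on $Q$ after position $i$. They also cannot equal $u_0,\dots,u_i$, since those are $v_0,\dots,v_i$ and $P$ has no repeated vertices. Hence the inner vertices of the two arcs are disjoint, and all vertices of the closed walk except the repeated base point $v_i$ are distinct.

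It remains to check that this closed walk has length at least $3$. Its length is $(j-i)+(k-i)\ge 2$. Length $2$ would force $j=k=i+1$, that is $v_{i+1}=u_{i+1}$, which contradicts the choice of $i$. So the walk is a circuit in the sense of Definition \ref{d2.4}(ii), contradicting that $T$ is a tree.

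\emph{Unique path $\Rightarrow$ tree.} Existence of a path between any two distinct vertices gives connectedness. Suppose there were a circuit $v_0,a_1,\dots,a_n,v_n=v_0$ with $n\ge 3$. Then $v_0,a_1,v_1$ is a path from $v_0$ to $v_1$. So is $v_0,a_n,v_{n-1},\dots,v_1$, and it has length $n-1\ge 2$, so the two paths are distinct. This contradicts uniqueness.

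The main obstacle is the bookkeeping in the uniqueness step. One has to choose the divergence index $i$ and the first re-meeting index $j$ carefully enough that the resulting closed walk has no repeated vertices and length at least $3$, and this is exactly where the exclusion of length-$2$ "circuits" in Definition \ref{d2.4}(ii) must be handled.
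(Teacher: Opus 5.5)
Your proof is correct. The paper itself gives no proof of this lemma; it only remarks that it is well known and follows almost immediately from the definitions. Your argument supplies the standard proof it omits.

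In the uniqueness direction, you choose the divergence index $i$ and the first re-meeting index $j$. This gives a closed walk whose vertices, apart from the base point, are pairwise distinct. Your exclusion of length $2$ is exactly the check needed for the requirement $n\ge 3$ in Definition~\ref{d2.4}(ii). The converse is also fine: from a circuit you extract two distinct paths from $v_0$ to $v_1$, of lengths $1$ and $n-1\ge 2$.

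One cosmetic point: the opening walk-to-path reduction is never used. Connectedness in Definition~\ref{d2.5}(i) is already phrased in terms of paths, and you build the circuit directly from $P$ and $Q$, so that paragraph can be dropped.
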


\begin{definition}\label{d2.6} For a graph $G=(V,E)$, the \emph{distance} $d_G(v,w)$ between two vertices $v,w\in V$ is the number of edges in a shortest path connecting these vertices.  If there is no path connecting  vertices $v,w\in V$ then we set $d_G(v,w)=\infty$.  
\end{definition}

\begin{definition}\label{d2.7} Let $G=(V_G,E_G)$ be a graph and $k\in \mathbb N$. 

(i) For an integer $n\geq 0$, the \emph{n-th closed neighborhood} of a vertex $v\in V_G$ is an induced subgraph with the set of vertices $\{w\in V_G| \, d_G(v,w)\leq n\}$. We denote this neighborhood by $N_n(v)$. 

(ii) The \emph{$n$-th closed neighborhood} of an induced subgraph $H\sqsubset G$  is the set of vertices 
$
\{w\in V_G| \, \exists \, v\in V_H \text{ such that }d_G(v,w)\leq n \}.
$
We denote this neighborhood by $N_n(H)$. 

\end{definition}

\begin{definition}\label{d2.8}  Let $H_i=(V_i, E_i)\, (i=1,2)$ be two subgraphs of a graph $G$. The \emph{induced union} $H_1\Cup H_2$ is the induced subgraph of $G$ generated by the set of vertices  $V_1\cup V_2$. It  equals  to the induced subgraph $\widehat{H_1\cup H_2}$. 
\end{definition} 

\begin{definition}
Let $G = (V_G, E_G)$ and $H = (V_H , E_H)$ be two graphs.
The disjoint union $G + H = (V, E)$ is a graph which has as its vertices the
set $V = V_G \sqcup V_H$ and as its edges the set $E = E_G \sqcup E_H$.
\end{definition}

\subsection{Graph burning and corresponding graph filtrations}
\setcounter{equation}{0}

In this section we remind  the definition of  \emph{graph burning} from \cite{MMBurning2025} and related concepts.

 Let $G=(V_G,E_G)$ be a graph and 
$S_G=(v_1, \dots, v_n)$ be a non-empty ordered  set of vertices. For $1\leq j\leq n$,
consider a set of induced  subgraphs $\mathcal N_j\sqsubset G$ defined by 
\begin{equation}\label{3.1}
\mathcal N_j=N_{j-1} (v_1)\Cup N_{j-2}(v_2)\Cup \dots \Cup N_1(v_{j-1})\Cup N_0(v_j). 
\end{equation}
We note that 
\begin{equation*}
\begin{matrix}
\mathcal N_1=\{v_1\}, \ \mathcal N_2=N_1(v_1)\Cup \{v_2\},  \ \text{and} \ \ \mathcal N_3=N_2(v_1)\Cup N_1(v_2)\Cup \{v_3\}. 
\end{matrix}
\end{equation*}

For $2\leq j\leq n+1$, let  $U_j$ be  the induced subgraph 
of $G$ given by   
\begin{equation}\label{3.2}
U_j=N_{j-1} (v_1)\Cup N_{j-2}(v_2)\Cup \dots \Cup N_1(v_{j-1}).
\end{equation}
We note that  for $2\leq j\leq n$, the set of vertices $V_{U_j}$ of the graph $U_j$ is obtained from the set of vertices $V_{\mathcal N_j}$ of the graph $\mathcal N_j$  by deleting the vertex $v_j$.

It follows immediately from (\ref{3.1}) and (\ref{3.2}) that we have the following  commutative diagram of inclusions of induced subgraphs 
\begin{equation}\label{3.3}
\begin{matrix}
\mathcal N_1 &\sqsubset & \mathcal N_2& \sqsubset &\mathcal N_3&\sqsubset \dots \sqsubset  &\mathcal N_n&\sqsubset&G&\\
 && \sqcup&  &\sqcup &\dots & \sqcup&&\sqcup& \\
                  &  &U_2&\sqsubset& U_3&\sqsubset\dots \sqsubset  &U_n&\sqsubset&U_{n+1}&\sqsubset  G\\
&& \sqcup&  &\sqcup &\dots & \sqcup&&\sqcup&\ \ \  || \\
                  &  &\mathcal N_1&\sqsubset& \mathcal N_2&\sqsubset\dots \sqsubset  &\mathcal N_{n-1}&\sqsubset&\mathcal N_{n}&\sqsubset  G\\
\end{matrix}
\end{equation}
in which the rows give two  filtrations of the graph  $G$ by induced subgraphs.

\begin{definition}\label{def::burningSeq} \rm Let $G=(V,E)$ be a graph and 
$S_G=(v_1, \dots, v_n)$ be a non-empty ordered  set of vertices. 
 The sequence $S_G$ is called a \emph{burning sequence}  if  in  diagram (\ref{3.3}) $v_j \notin U_j$ for $2\leq j\leq n$ and  $U_{n+1}=G$. 
 The elements of $S_G$ are called \emph{burning sources}. The vertex $v_i$ is called the \emph{source in the time $i$}.  For every burning sequence $S_G$ we denote $\widehat S_G = \{v_1, \dots, v_n\}\subset ~V$ the set of corresponding burning sources.
\end{definition}

Let $\mathbb N$ be the set of positive integers. A burning sequence $S_G$ define the commutative diagram  (\ref{3.3}).  Setting 
$
\mathcal N_{n+1}\colon =U_{n+1}=G
$
we obtain a filtration of $G$ 
\begin{equation}\label{3.4}
\mathcal N_1 \sqsubset  \mathcal N_2\sqsubset \dots \sqsubset  \mathcal N_n\sqsubset \mathcal N_{n+1}=G
\end{equation}
which corresponds to  the burning sequence  $S_G$. We define a function 
 $\lambda_G\colon V_G\to \mathbb N$, where  
\begin{equation}\label{3.5} 
 \lambda_G(v)=\min\{i \,| \, v\in \mathcal N_i\}
\end{equation}
It follows immediately from   filtration (\ref{3.4})   that   the function $\lambda_G$ is well defined.

We call a pair $\mathbf B(G)\colon =(\lambda_G, S_G)$  a \emph{burning (process)} for the graph $G$. Further,   for a vertex $v\in V_G$ the value $\lambda_G(v)$ we call  the \emph{burning time of the vertex} $v$.

\begin{theorem} [Lemma 3.2 and Theorem 3.5 in \cite{MMBurning2025}] \label{t3.5}  
 Let $G=(V,E)$ be a graph and $\mathbf B(G) =(\lambda_G, S_G)$ be its burning. Then the function $\lambda_G=\lambda(\mathbf  B(G))$   is a surjective map from  the set $V$ of vertices to the set  $\{1,2,\dots, {\mathbf T}\}$,  
where  ${\mathbf T}=n $ or ${\mathbf T}=n+1$. Moreover, $\lambda_G$ defines an unique graph   map $G\to \PG_{\mathbf T}$  which coincides 
with  $\lambda_G$ on the set of vertices $V$ of the graph $G$. We continue to denote this graph map  by $\lambda_G=\lambda(\mathbf  B(G))$. 
\end{theorem}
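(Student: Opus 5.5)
We have to show three things: the image of $\lambda_G$ is exactly an initial segment $\{1,\dots,\mathbf T\}$ of $\mathbb N$; $\mathbf T\in\{n,n+1\}$; and $\lambda_G$ is a graph map to $\PG_{\mathbf T}$. The argument is direct from the filtration (\ref{3.4}) and the burning condition $v_j\notin U_j$.

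\emph{Values and the end time.} Since $\mathcal N_{n+1}=G$, every vertex has $\lambda_G(v)\le n+1$, so the image lies in $\{1,\dots,n+1\}$. For $1\le j\le n$ the source $v_j$ lies in $\mathcal N_j$. For $j\ge 2$ we have $v_j\notin U_j$, and $V_{\mathcal N_{j-1}}\subset V_{U_j}$ by (\ref{3.3}), so $v_j\notin\mathcal N_{j-1}$. Hence $\lambda_G(v_j)=j$, and trivially $\lambda_G(v_1)=1$. Thus every value $1,\dots,n$ is attained. The value $n+1$ is attained exactly when $V\setminus V_{\mathcal N_n}\neq\emptyset$. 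So the image is $\{1,\dots,\mathbf T\}$ with $\mathbf T=n$ or $n+1$, and $\lambda_G$ is surjective onto it.

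\emph{Graph map.} Take an edge $\{v,w\}$ and suppose $\lambda_G(v)=i\le\lambda_G(w)$. It suffices to show $w\in\mathcal N_{i+1}$, i.e. $\lambda_G(w)\le i+1$; then $\lambda_G(v),\lambda_G(w)$ are equal or adjacent in $\PG_{\mathbf T}$.

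Since $v\in\mathcal N_i$, there is some $k\le i$ with $d_G(v_k,v)\le i-k$. By the triangle inequality, $d_G(v_k,w)\le i+1-k$, so $w\in N_{(i+1)-k}(v_k)$.

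If $i+1\le n$, this term appears in $\mathcal N_{i+1}$ from (\ref{3.1}), so $w\in\mathcal N_{i+1}$. If $i=n$, the same term appears in $U_{n+1}=\mathcal N_{n+1}$. If $i=n+1$, then $\lambda_G(w)\le n+1=i$ holds automatically.

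A small care point is the index bookkeeping at the top level $i=n$, where $\mathcal N_{n+1}$ is defined as $U_{n+1}$ rather than by (\ref{3.1}). This is also the only step where the definitions need to be read carefully.

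\emph{Uniqueness.} Uniqueness of the graph map is immediate: a graph map is determined by its vertex map, and this vertex map is $\lambda_G$ itself. I expect no serious obstacle in the proof. The only points needing attention are verifying $\lambda_G(v_j)=j$ from the burning condition, which gives surjectivity, and the boundary index case above.
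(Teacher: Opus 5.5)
Your proof is correct. The identity $\lambda_G(v_j)=j$ (from $\mathcal N_{j-1}\sqsubset U_j$ and the monotonicity of the filtration (\ref{3.4})) gives surjectivity onto $\{1,\dots,\mathbf T\}$ with $\mathbf T\in\{n,n+1\}$. The triangle-inequality bound $\lambda_G(w)\le\lambda_G(v)+1$ for adjacent vertices is exactly the content of Lemma~\ref{l3.4} and yields the graph map; the case $i=n$ needs no care, since $\mathcal N_{n+1}=G$.

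The paper does not reprove this statement but imports it from \cite{MMBurning2025} (Lemma 3.2, Lemma 3.4 and Theorem 3.5 there), and your self-contained argument matches the route indicated by those cited results.
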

 We call the integer $\mathbf T$   the \emph{end time of the burning}.   We write $\mathbf B^{\mathbf T}(G)$   if the process ends in the  time ${\mathbf T}$. We denote  by $\lambda(\mathbf  B^{\mathbf T}(G))$ the function $\lambda_G$ which corresponds to the  process $\mathbf B^{\mathbf T}(G)$.

\begin{lemma}[Lemma 3.4 in \cite{MMBurning2025}]
\label{l3.4}
Let $G=(V,E)$ be a graph with a burning  $\mathbf  B(G)=(\lambda, S_G)$.    Let  $v,w\in V$ be  neighbours, that  is $\{v,w\}\in E$.  Then for any two  neighbors vertices $v,w\in V$  we have $|\lambda(v)-\lambda(w)|\in\{0,1\}$.
\end{lemma}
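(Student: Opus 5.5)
We need to prove Lemma \ref{l3.4}: if $\{v,w\}\in E$, then $|\lambda(v)-\lambda(w)|\le 1$. By symmetry it suffices to show $\lambda(w)\le \lambda(v)+1$ for every edge $\{v,w\}$. Put $i=\lambda(v)$. The plan is to show that $w\in\mathcal N_{i+1}$, where for $i=n$ we read $\mathcal N_{n+1}=U_{n+1}=G$ as in (\ref{3.4}). Then definition (\ref{3.5}) gives $\lambda(w)\le i+1$ immediately.

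If $i=n$, then $w\in G=\mathcal N_{n+1}$ and there is nothing to prove. So assume $i\le n$ and $i+1\le n$, or $i=n$ handled as above. Since $v\in\mathcal N_i$, formula (\ref{3.1}) gives some $k\le i$ with $v\in N_{i-k}(v_k)$, that is, $d_G(v_k,v)\le i-k$.

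By the triangle inequality for $d_G$ along the edge $\{v,w\}$, we get $d_G(v_k,w)\le i-k+1=(i+1)-k$. Therefore $w\in N_{(i+1)-k}(v_k)$, and this set is one of the terms in the induced union defining $\mathcal N_{i+1}$ in (\ref{3.1}), or $U_{n+1}$ in (\ref{3.2}) when $i=n$; the index $k\le i$ is within range in both cases. Hence $w\in\mathcal N_{i+1}$.

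No real obstacle is expected. The only point needing care is the index bookkeeping: the term $N_0(v_{i+1})$ in $\mathcal N_{i+1}$ is not needed, and the case $i=n$ is covered by the convention $\mathcal N_{n+1}=G$. The burning-sequence condition $v_j\notin U_j$ is not used at all.
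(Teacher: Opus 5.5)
This paper does not prove the lemma; it quotes it from \cite{MMBurning2025}, so there is no in-paper argument to compare yours with. Your direct argument from (\ref{3.1}), (\ref{3.4}) and (\ref{3.5}) is correct and self-contained. If $v\in N_{i-k}(v_k)$ with $k\le i$, then the edge $\{v,w\}$ gives $w\in N_{i+1-k}(v_k)$. That set is one of the terms of $\mathcal N_{i+1}$, or is contained in $U_{n+1}=G$ when $i=n$. Hence $\lambda(w)\le\lambda(v)+1$, and symmetry finishes the proof.

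The one thing to tidy is your case split, which as written is garbled and omits the case $\lambda(v)=n+1$. That case is trivial: $\mathcal N_{n+1}=G$ forces $\lambda\le n+1$ everywhere, so $\lambda(w)\le n+1\le\lambda(v)+1$. Dispose of both $i=n$ and $i=n+1$ this way at the start. The case that needs your argument is then simply $i\le n-1$, where $\mathcal N_{i+1}$ is given by (\ref{3.1}).
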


\begin{definition}\label{d3.6} Let $G$ be a graph and    $\mathbf  B^{\mathbf T}(G)$   be a burning process. The graph map (homomorphism) $\lambda=\lambda(\mathbf  B^{\mathbf T}(G))$ is called the \emph{burning map (homomorphism)}.   
\end{definition}

We also will need the following immediate corollary of Lemma \ref{l3.4} and Definition \ref{d2.3}.

\begin{corollary}\label{cor:l3.4}
Let $G=(V,E)$ be a graph and $\mathbf  B^{\mathbf T}(G)$ be its burning.  Then the map $\lambda=\lambda(\mathbf  B^{\mathbf T}(G))$ is a burning homomorphism if and only if for any two adjacent vertices $v,w\in V$  we have $|\lambda(v)-\lambda(w)|=1$.
\end{corollary}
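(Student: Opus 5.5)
The statement is Corollary \ref{cor:l3.4}. It follows directly from Lemma \ref{l3.4}, Theorem \ref{t3.5} and the definition of homomorphism in Definition \ref{d2.3}. By Theorem \ref{t3.5}, $\lambda$ is a graph map $G\to \PG_{\mathbf T}$ whose restriction to $V$ is the burning-time function. The plan is to unpack what it means for this particular map to be a homomorphism into the path graph $\PG_{\mathbf T}$, and then compare with the dichotomy given by Lemma \ref{l3.4}.

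First, we record the structure of the target. In $\PG_{\mathbf T}$ the vertices are $1,\dots,\mathbf T$, and $\{i,j\}$ is an edge if and only if $|i-j|=1$. By Definition \ref{d2.3}, $\lambda$ is a homomorphism exactly when $\{\lambda(v),\lambda(w)\}\in E_{\PG_{\mathbf T}}$ for every $\{v,w\}\in E$. Equivalently, $|\lambda(v)-\lambda(w)|=1$ for every pair of adjacent vertices $v,w$. This already gives both directions.

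For the direction ($\Rightarrow$): if $\lambda$ is a homomorphism, then each edge $\{v,w\}$ maps to an edge of $\PG_{\mathbf T}$, so $|\lambda(v)-\lambda(w)|=1$. For the direction ($\Leftarrow$): if $|\lambda(v)-\lambda(w)|=1$ for all adjacent $v,w$, then $\{\lambda(v),\lambda(w)\}$ is an edge of $\PG_{\mathbf T}$. Hence $\lambda$ is a homomorphism, and by Definition \ref{d3.6} it is a burning homomorphism.

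The role of Lemma \ref{l3.4} is to explain why these are the only possibilities. For adjacent $v,w$ the difference $|\lambda(v)-\lambda(w)|$ lies in $\{0,1\}$. So $\lambda$ fails to be a homomorphism precisely when some edge has $\lambda(v)=\lambda(w)$, that is, when some edge is collapsed to a vertex. No step is a genuine obstacle. The only point to check is that the target really is $\PG_{\mathbf T}$ with its standard edge set, and this is exactly what Theorem \ref{t3.5} guarantees.
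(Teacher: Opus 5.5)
Your proof is correct and matches the paper, which gives no written proof and simply calls the statement an immediate consequence of Lemma~\ref{l3.4} and Definition~\ref{d2.3}: a map into $P_{\mathbf T}$ is a homomorphism exactly when every edge is sent to a pair $\{i,i+1\}$, i.e.\ $|\lambda(v)-\lambda(w)|=1$. As you note, Lemma~\ref{l3.4} is not actually needed for the equivalence itself; it only identifies the possible failure as an edge collapsed to a single vertex.
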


Finally, we remind that there are functorial relations between burnings of various graphs. Let $(\lambda_G, S_G)$ and  $(\lambda_H, S_H)$  be two  burnings  of  graphs $G$  and $H$, respectively,  with the source sequences  $S_G=(v_1, \dots, v_k)$  and $S_H=~(w_1, \dots, w_m)$  where $m\geq k\geq 1$. 
 For $2\leq j\leq  k+1$, let $U_j$   be the induced subgraph of $G$ defined in (\ref{3.2}).  For $2\leq j\leq  k+1$,  let 
$W_j\sqsubset H $  be the induced subgraph of $H$ defined  by
\begin{equation}\label{3.11}
W_j=N_{j-1} (w_1)\Cup N_{j-2}(w_2)\Cup \dots \Cup N_1(w_{j-1}).
\end{equation}

\begin{definition}\label{d3.15} \rm  Let  $G$, $H$ be graphs and
$\mathbf B(G)$,  $\mathbf B(H)$ be two  burnings  as above.
  A graph map $f\colon G\to H$ is called a \emph{morphism of the burnings}, and we write $f\colon \mathbf B(G)\to \mathbf B(H)$,   if  $k\leq m$,    $f(v_i)=w_i$ for  $1\leq i\leq k$,  and  there is a  commutative diagram 
of graph maps
\begin{equation}\label{3.12}
\begin{matrix}
G &\overset{f}\longrightarrow & H\\
\lambda_G\downarrow\ \ \  \ &&\ \ \ \downarrow\lambda_H\\
P_{T^G} &\overset {\tau}\longrightarrow & P_{T^H}, \\
\end{matrix}
\end{equation}
where $T^G$ and $T^H$ are the burning times for the burnings  of  the graphs $G$ and $H$,  respectively. Hence, in particular,  $f(U_j)\subset W_j$ for $2\leq j\leq k+1$.
\end{definition}

\section{Burning of a spanning tree}\label{S3}
\setcounter{equation}{0}

In this section we describe relations between burnings  of a graph and burnings  of its  spanning  trees for the burnings defined in previous section. The main result of this section is Theorem \ref{thm::spanning tree} , which states that any burning of a graph $G$ defines a burning of its  spanning tree, which is in general not uniquely determined. Recall that a  \emph{spanning tree}    of  a connected graph  $G$ is a subgraph tree which contains  all vertices of  $G$. 

The burning process in Definition \ref{def::burningSeq} differs from the burning process described in  \cite[Introduction]{Bonato_0}.  In particular, as follows from our definition,   the distance between sources $v_i$ and $v_j$ satisfies condition $d(v_i, v_j)\geq |i-j|+1$ in contrast to the situation  in  \cite{Bonato_0} where  $d(v_i, v_j)\geq |i-j|$.  The proof of Theorem \ref{thm::spanning tree} below  is  similar  to that  given in   \cite{Bonato_0} but there is an essential difference  as we note in Remark \ref{Difference}. At first, we state and prove a  technical lemma. 

\begin{lemma}\label{l6.1}  Let $G=(V,E)$ be a connected graph and $S_G$ be a burning sequence $(v_1, \dots, v_n)$ of a burning $(\lambda, S_G)$ of $G$. Then 
for any vertex $w\in V$ we have 
\begin{equation}\label{6.1}
d(v_k, w)\geq  |\lambda(v_k) -\lambda( w)|.
\end{equation}
\end{lemma}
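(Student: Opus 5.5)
The natural route is the Lipschitz property of $\lambda$ along edges. By Lemma \ref{l3.4}, for any two adjacent vertices $v,w$ we have $|\lambda(v)-\lambda(w)|\le 1$, so $\lambda$ is $1$-Lipschitz with respect to the graph distance.

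First, handle the degenerate case. If $d(v_k,w)=0$, then $w=v_k$ and both sides of (\ref{6.1}) vanish. Since $G$ is connected, $d(v_k,w)$ is always finite, so we may otherwise assume $d=d(v_k,w)\ge 1$.

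In that case, choose a shortest path $v_k=u_0,u_1,\dots,u_d=w$ in $G$. Consecutive vertices $u_{i-1},u_i$ are adjacent, so Lemma \ref{l3.4} gives $|\lambda(u_{i-1})-\lambda(u_i)|\le 1$ for each $i=1,\dots,d$. The triangle inequality then yields
\[
|\lambda(v_k)-\lambda(w)|\le \sum_{i=1}^{d}|\lambda(u_{i-1})-\lambda(u_i)|\le d = d(v_k,w),
\]
which is (\ref{6.1}).

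There is no real obstacle here; the only point to check is that Lemma \ref{l3.4} applies to every edge of $G$, which it does for an arbitrary burning. The fact that $v_k$ is a source is not even needed: the argument proves the same inequality for any two vertices. If one preferred not to rely on Lemma \ref{l3.4}, a direct argument would be needed. For the upper bound $\lambda(w)\le\lambda(v_k)+d$, note that $\lambda(v_k)=k$, since $v_k\notin U_k$ but $v_k\in\mathcal N_k$, and $w\in N_{d}(v_k)\subset\mathcal N_{k+d}$ when $k+d\le n$; if $k+d>n$, the bound holds trivially because $\lambda(w)\le \mathbf T\le n+1$. The lower bound would follow symmetrically from the neighbourhood structure. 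Citing Lemma \ref{l3.4} is cleaner, and that is the route I would take.
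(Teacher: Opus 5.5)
Your argument is correct and is essentially the paper's proof: the paper restricts the graph map $\lambda\colon G\to P_{\mathbf T}$ from Theorem~\ref{t3.5} to a shortest path from $v_k$ to $w$, and notes that the image of this path in $P_{\mathbf T}$ cannot separate $\lambda(v_k)$ from $\lambda(w)$ by more than $d(v_k,w)$; this is the same edgewise bound of Lemma~\ref{l3.4} that you sum along the path with the triangle inequality. (In your optional direct sketch the lower bound is not literally ``symmetric'', since $w$ need not be a source, but the route you actually adopt does not depend on that sketch.)
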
 
\begin{proof}  Let $p$ be a path from the vertex $v_k$ to the vertex $w$ of length $d=d(v_k, w)$ which is finite, since the graph is connected.    By Theorem \ref{t3.5}, the function $\lambda$ is  a graph map 
$\lambda\colon G\to P_{\mathbf T}$ where $P_{\mathbf T}$ is a path graph.  The restriction of 
$\lambda$ to the path $p$ gives the path map $\lambda_P\colon P\to P_{\mathbf T}$ of the subgraph $P\subset G$ which is generated by $p$. The graph $P$ is a path graph  of the length $d$ and its image is the path subgraph of $P_{\mathbf T}$. Hence 
the distance between images $\lambda(v_k)$ and $\lambda(w)$  of the vertices 
$v_k, w\in V_P$ is less or equal to the  distance $d(v_k, w)$. That is 
$d(v_k, w)\geq  |\lambda(v_k) -\lambda( w)|.$
\end{proof}

\begin{theorem}\label{thm::spanning tree} Let $G=(V,E)$ be a connected graph and $S_G$ be a burning sequence $(v_1, \dots, v_n)$ of a burning $(\lambda, S_G)$ of $G$. Then there is a spanning tree $T\subset  G$  such that $S_G$ is a burning sequence for some burning of $T$. 
\end{theorem}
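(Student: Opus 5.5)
The idea is to build the spanning tree from the burning itself. Every vertex that is not a source is joined to a neighbour that burned exactly one time step earlier. This gives a forest rooted at the sources, in which the depth of each vertex records how long the fire took to reach it. We then add edges to connect the forest into a spanning tree $T$, and check that $S_G$ is still a burning sequence for $T$, with the same function $\lambda$.

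\textbf{Step 1: a formula for $\lambda$.} From (\ref{3.1}), $w\in\mathcal N_j$ if and only if there is some $i\le j$ with $d_G(v_i,w)\le j-i$. Hence
\[
\lambda(w)=\min_i\,\bigl(i+d_G(v_i,w)\bigr).
\]
For a source, the condition $v_j\notin U_j$ forces $d_G(v_i,v_j)\ge j-i+1$ for every $i<j$. Together with $v_j\in N_0(v_j)$ this gives $\lambda(v_j)=j$.

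\textbf{Step 2: every non-source has an earlier-burned neighbour.} Let $w$ be a vertex that is not a source.
\begin{itemize}
\item First, $w\notin\mathcal N_1$: indeed $\mathcal N_1=\{v_1\}$, and for $j\ge2$ membership in $\mathcal N_j$ without being $v_j$ means membership in $U_j$. So $\lambda(w)\ge2$.
\item Take an index $i$ achieving the minimum in Step 1. Since $w\neq v_i$, we have $d_G(v_i,w)\ge1$.
\item Let $u$ be the neighbour of $w$ on a shortest path from $v_i$ to $w$. Then $\lambda(u)\le i+d_G(v_i,u)=\lambda(w)-1$.
\item Lemma \ref{l3.4} gives $\lambda(u)\ge\lambda(w)-1$, so $\lambda(u)=\lambda(w)-1$.
\end{itemize}
For each non-source $w$, choose one such neighbour $u$ and call it the parent of $w$.

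\textbf{Step 3: the forest and the tree.} Let $F$ be the subgraph with vertex set $V$ and the edges $\{w,\mathrm{parent}(w)\}$.
\begin{itemize}
\item $\lambda$ drops by exactly $1$ along each parent edge. So following parents from any vertex terminates, and cannot cycle, at a vertex with no parent, which is a source. Thus $F$ is a forest whose components are rooted at the sources.
\item In the component rooted at $v_i$, every vertex $w$ satisfies $d_F(v_i,w)=\lambda(w)-i$. This holds by induction along parents, starting from $\lambda(v_i)=i$.
\item Since $G$ is connected, we can add edges of $G$ joining distinct components of $F$ one at a time, never creating a circuit, until we obtain a spanning tree $T$ with $F\subset T\subset G$.
\end{itemize}

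\textbf{Step 4: $S_G$ is a burning sequence for $T$.}
\begin{itemize}
\item \emph{The non-reburning condition.} Since $T$ is a subgraph of $G$, $d_T\ge d_G$. Hence every neighbourhood $N_k(v_i)$ computed in $T$ is contained in the one computed in $G$, and so $U_j^T\subset U_j^G$. Therefore $v_j\notin U_j^T$ for $2\le j\le n$.
\item \emph{Coverage.} Each vertex $w$ of the component of $F$ rooted at $v_i$ satisfies $d_T(v_i,w)\le\lambda(w)-i$. Hence $w\in\mathcal N^T_{\lambda(w)}$, where $\lambda(w)\le n+1$.
\item If $\lambda(w)\le n$, then $w\in\mathcal N^T_n\subset U^T_{n+1}$ by diagram (\ref{3.3}), since the burning grows monotonically.
\item If $\lambda(w)=n+1$, then $i\le n$ and $w\in N_{n+1-i}(v_i)\subset U^T_{n+1}$ directly.
\end{itemize}
So $U^T_{n+1}=T$, and $S_G$ is a burning sequence for $T$. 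In fact this argument also shows $\lambda_T=\lambda$.

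\textbf{Expected main obstacle.} The delicate point is Step 2: proving that every non-source vertex has a neighbour burned exactly one step earlier. This is precisely where the stricter definition of burning used in this paper matters. It relies on the formula for $\lambda$ from Step 1, on Lemma \ref{l3.4}, and on the fact that sources satisfy $\lambda(v_j)=j$, so no vertex other than a source can be a root of $F$.
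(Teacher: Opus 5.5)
Your proof is correct. Its overall shape matches the paper's: a forest rooted at the sources in which depth records burning time, completed to a spanning tree by extra edges of $G$. The difference is in how the forest is built.

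The paper assigns each vertex $w$ canonically to the smallest $k$ with $\lambda(w)=k+d(v_k,w)$, which gives a partition $V=\bigsqcup_k M_k$. It then needs a separate Claim, proved by concatenating geodesics, that every shortest path from $v_k$ to a vertex of $M_k$ stays inside $M_k$. This lets $M_k$ carry a shortest-path tree $T_k$. The $T_k$ are then joined through a spanning tree of the quotient graph, and the burning condition for $T$ is asserted to be immediate.

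You bypass both the partition and the Claim. The formula $\lambda(w)=\min_i(i+d_G(v_i,w))$ directly gives each non-source a neighbour burned exactly one step earlier. Acyclicity and rootedness of $F$ then follow from the strict decrease of $\lambda$ along parent edges. You should add one line here: each vertex carries at most one parent edge, so a circuit in $F$ would be a directed parent cycle. Your Step 4 also spells out what the paper leaves as immediate, and gives the stronger conclusion $\lambda_T=\lambda$.

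What the paper's route buys is canonicity. The sets $M_k$ involve no choices, while your components depend on the parents chosen. For example, for $P_5$ with $S=(2,5)$, vertex $4$ may hang from $3$ or from $5$. The paper's forest is one particular instance of yours. Its choice-free partition is reused in Corollary \ref{cor:cupTi}, and more essentially to orient edges with $\lambda(v)=\lambda(w)$ in Proposition \ref{prop:digraphStruct}. There, a choice-dependent partition could make that orientation depend on the choices.
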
 
\begin{proof} Let $\widehat S_G$ be the set  elements of $S_G$.  For any vertex $w\in G$  we define a set $V_w\subset \widehat S_G$  by 
\begin{equation}\label{6.2}
V_w=\{v_k\in S_G\,| \, \lambda(w)=\lambda(v_k)+d(v_k, w)\}.
\end{equation}
As follows from (\ref{6.2}), $v_j\in V_{v_j}$ since $\lambda(v_j)=j$ and $d(v_j, v_j)=0$.  
Let $2\leq j\leq n$. By Definition \ref{def::burningSeq},  the vertex  $v_j$  does not belong to $U_j$ defined in (\ref{3.2}). Hence, for $1\leq k\leq j-1$, 
$v_j\notin N_{j-k}(v_{k})$ by (\ref{3.2}). That is,  for such values of $k$, 
 $d(v_j, v_{k})>j-k$ and
 $$
\lambda(v_{k})+d(v_j, v_{k})=  k+d(v_j, v_{k}) > k+j-k =j=\lambda(v_j).
$$
Hence,  $v_k\notin  V_{v_j}$ for $1\leq k\leq j-1$. For $j<k\leq n$,   
 $d(v_j, v_{k})>1$ and 
$$
\lambda(v_{k})+d(v_j, v_{k})=  k+d(v_j, v_{k}) > j=\lambda(v_j). 
$$
Hence,  $v_k\notin  V_{v_j}$ for $k>j$. Thus we obtain that 
\begin{equation}\label{6.3}
V_{v_k}=\{v_k\}    \ \ \text{for every burning source} \ \  v_k\in \widehat S_G.
\end{equation}

We check now, that the set $V_w$ is non-empty for any vertex $w\in V$. Fix a vertex $w\in V$.  We  have proved already  this statement for $w\in \widehat S_G$. Fix a vertex $w\in V$ such that $w\notin \widehat S_G$ and let $\lambda(w)=j$. 
By Definition \ref{def::burningSeq}  and relation (\ref{3.2}), we have $U_{n+1}=G$. Let $U_1=\{v_1\}=N_0(v_1)$. 
By (\ref{3.2}) and \ref{3.1}),  there is a minimal value  $j\, (2\leq j\leq n+1)$ such that  
\begin{equation}\label{6.4}
w\in U_j=N_{j-1} (v_1)\Cup N_{j-2}(v_2)\Cup \dots \Cup N_2(v_{j-2})\Cup N_1(v_{j-1}) 
\end{equation}
and 
\begin{equation}\label{6.5}
w\notin U_{j-1}=\begin{cases}  N_0(v_1) & \text{for} \ \ j=2, \\
N_{j-2} (v_1)\Cup N_{j-3}(v_2)\Cup \dots \Cup N_1(v_{j-2}) &  \text{for} \  j>2.\\
\end{cases}
\end{equation}
Now, by (\ref{6.4}) and (\ref{6.5}), there is at least one number    $k \, (1\leq k\leq j-1)$ such that $w\in N_{j-k}(v_k)$ and $w\notin N_{j-k-1}(v_k)$. 
Hence, by Definitions \ref{d2.6} and \ref{d2.7},  we obtain that 
$d(w, v_k)=j-k$. Hence $j=\lambda(w)= \lambda(v_k)+d(w, v_k)$ and $v_k\in V_w$. 

For any vertex $w\in V,$ let $\mu (w)$ be a minimal value of $k$ such that $v_k\in V_w$. We obtain a function 
\begin{equation}\label{6.6}
\mu\colon V\to \widehat S_G
\end{equation}
which is well defined  since the map $\lambda$ is well defined and which is an epimorphism by (\ref{6.3}). For every vertex $v_k\in \widehat S_G$ define 
a subset of vertices $M_k\subset V$ by setting $M_k=\mu^{-1}(v_k)$. We note, that $v_k\in M_k$. By the definition, $M_i\cap M_j=\emptyset $ for $i\ne j$. Now, for any $1\leq k \leq n$,  we  construct a  subgraph  tree $T_k\subset G$ with the set of vertices 
$V_{T_k}=M_k$. 

\textbf{Claim}. Let $w\in M_k\, (1\leq k\leq n)$ and, hence,  $\lambda(w)=k+d(v_k, w)$.  Let $p$ be  a path of the length $d=d(v_k,w)$ with the pairwise distinct edges 
 from  $v_k$ to $w$ given by 
\begin{equation}\label{6.7}
p=(v_k=w_0, a_1, w_1, \dots , a_d, w_d=w) \ \  \text{where} \ \ a_i\in E, w_i\in V. 
\end{equation} 
  Then $w_i\in M_k$ for 
$0\leq i\leq d$. 

\begin{proof}  
Recall that we already know that $w_0, w_d\in M_k$. By Theorem \ref{t3.5}, the function $\lambda$ is  a graph map 
$\lambda\colon G\to P_{\mathbf T}$ where $P_{\mathbf T}$ the path graph.  The restriction of 
$\lambda$ to the path $p$ gives the path map $\lambda_P\colon P\to P_{\mathbf T}$ of the subgraph $P\subset G$ which is generated by the path $p$. The graph $P$ is a path graph  of the length $d$ and its image is the path subgraph of $P_{\mathbf T}$. We have $\lambda(w_0)=\lambda(v_k)=k$ and $\lambda(w_d) =\lambda(w) = k+d$ since $w\in M_k$. Hence the image under map $\lambda_P$ of the path of length $d$ is the path of the length $d$ and, therefore, $\lambda_P$ is the graph monomorphism.  Hence, for the vertices $w_i$ fitting in (\ref{6.7}) we have 
\begin{equation}\label{6.8} 
\lambda(w_i)=k+i \ \text{for} \ \ 0\leq i\leq d \ \  \text{and} \  v_k\in V_{w_i}.
\end{equation} 

Let us suppose that there is a vertex $w_j \, (0<j< d)$  of the path $p$ such  that $w_j\in M_q$ for $q< k$.  Then 
\begin{equation}\label{6.9}
\lambda(w_j)= k+j=\lambda(v_q)+d(v_q, w_j)= q+d(v_q, w_j). 
\end{equation}
Consider a path 
$$
p^{\prime}=(v_q=s_0, b_1, s_1, \dots, b_{d^{\prime}}, s_{d^{\prime}}=w_j) \ \text{where} \ s_{i}\in V, b_i\in E, d^{\prime}=d(v_q, w_j)
$$ 
from the vertex $v_q$ to the vertex $w_j$ of the length 
$d(v_q, w_j)$.  Similarly to case of the path $p$, we conclude that the restriction $\lambda_{p^{\prime}}\colon P^{\prime}\to  P_{\mathbf T}$ is a monomorphism where 
$P^{\prime}$ is the graph generated by the path $p^{\prime}$. Comparing the the restrictions of the map $\lambda$ to $P$ and $P^{\prime}$  and using inequalities $\lambda(v_q)< \lambda(w_j)< \lambda(w)$  we obtain that restriction of $\lambda$ to the path 
$$
(v_q=s_0, b_1, s_1, \dots, b_{d^{\prime}}, s_{d^{\prime}}=w_j, a_{j+1}, \dots , a_d, w_d=w)
$$
is a monomorphism. Hence 
$$
d(v_q, w)=d(v_q, w_j)+d(w_j, w)
$$
 and furher  using  (\ref{6.9}) and the relation $d(w_j, w)=d-j$ we obtain 
$$
\lambda(w)= k+d=(k+j)+(d-j)= \lambda(w_j)+d(w_j,w)
$$
$$
=\lambda(v_q)+d(v_q, w_j)+d(w_j,w)=\lambda(v_q)+d(v_q, w). 
$$
 Thus we obtain a contradiction with the minimality $k$  for  $w\in M_k$. 
\end{proof}

\bigskip

Now we can come back to the construction of a tree $T_k$ with the set of vertices $M_k$. For any $w\in M_k$ we denote $\chi(w):=d(v_k, w)$ and fix a path $p_w=(v_k, a_1^w, \dots , a^{w}_{\chi(w)} , w)$  in $G$ from $v_k\in M_k$ to $w\in M_k$. Let
$$
m = \max_{w\in M_k}\chi(w).
$$
We define the sequence of rooted trees 
\begin{equation}\label{6,10}
\{v_k\}=T_k^0\subset T_k^1\subset \dots \subset T_k^{m}=T_k 
\end{equation}
as follows.  The tree $T_k^1=(M_1, E_1)$ has the set of edges 
$$
E_1=\{\{v_k, w\}\, | \, \chi(w)=1\}.
$$
Further,  we define $T_k^j=(M_j, E_j)$ where $E_j$ is defined inductively as
$$
E_j=E_{j-1}\cup\{ a_{\chi(w)}^w \, | a_{\chi(w)}^w\ \text{is the last edge of}\  p_w\}. 
$$
It follows from the claim above, that  the set $E_j$ is well defined. Moreover, by induction from the claim above follows that $T_k^j$ is connected.  Moreover, $T_k^j$  is  a tree due to Lemma \ref{l:onePath}, since by induction  there is only one path between two vertices in $T_k^j$.

Now we  define the spanning tree of the graph $G$  using the trees 
$T_k\, (1\leq k\leq n)$. For every two distinct trees  $T_k, T_l \, (1\leq k,l\leq n)$,  
we denote by $E_{k,l}$ the set of vertices that has a form $\{x,y\}$ where 
$x\in M_k, y\in M_l$. Note that this set can be empty for some pairs $(k,l)$. 
In every set $E_{k,l}$ fix an edge $a_{k,l}$. Now define a quotient graph 
$\mathbf G=G/\sim\, $ with the set of vertices $\{t_i\, |\,  1\leq i\leq n\}$ and the set of edges given by such pairs of vertices $\{t_k, t_l\}$ for which the edge $a_{k,l}$ 
exists. The graph $\mathbf G$ is connected  since the graph $G$ is connected. Consider a spanning tree $\mathbf Q$ of the graph $\mathbf G$.   Now we  define a spanning tree $T$ of the graph $G$ by setting 
$$
T=(V_T, E_T) \ \ \text{where} \  V_T=V, \  
E_T=\bigcup_{1\leq k\leq n} E_{T_k}\bigcup\{a_{k,l} \, | \, \{t_k, t_l\}\in E_{\mathbf Q}\}. 
$$
It follows immediately from our construction and Definition \ref{def::burningSeq} that the burning sequence $S_G$ is a burning sequence of the tree $T$.
\end{proof}

\begin{remark}\label{Difference}
 The key difference between Theorem \ref{thm::spanning tree}  above and  the theorem, stated by Bonato and coauthors \cite[Th. 2.4]{Bonato_0}  is that  in our settings all the trees $T_j$ are not empty. It is possible due to the fact that we do not allow a source $v_j$ to be the vertex, which anyway would be burned at the step $j$ from some other source. Let us illustrate the difference with an example. Consider a path graph $P_4$ with the vertices $1,2,3,4$ and edges $\{i, {i+1}\}$.
By Bonato et al. the burning sequence $(1,2,3,4)$ is allowed, which leads to $T_2, T_3, T_4$ being empty. On the contrast, in our setting the only allowed burning sequences up to isomorphism are $(1,3)$, $(2,4)$ and $(1, 4)$, for each of which both trees $T_1$ and $T_2$ are not empty.
\end{remark}
We can omit the last step, i.e. the construction of spanning tree of $G$, at the end of the proof of Theorem \ref{thm::spanning tree}. Then we obtain the following result.

\begin{corollary}\label{cor:cupTi}  Let $G=(V,E)$ be a  graph and $S_G$ be a burning sequence $(v_1, \dots, v_n)$ of a burning $(\lambda, S_G)$ of $G$. Then there are  trees  $T_i\subset  G, i=1\dots, n$ such that $V_{\sqcup T_i}=V$ and $S_G$ is the burning sequence for a burning of the graph $H= T_1+T_2+\dots+T_n$. Moreover, the corresponding burning map  $\lambda=\lambda_ {H}$ is a homomorphism.
\end{corollary}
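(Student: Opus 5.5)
We reuse the construction in the proof of Theorem \ref{thm::spanning tree} and stop before the spanning tree of $G$ is assembled. That proof builds the sets $M_k=\mu^{-1}(v_k)$, which are pairwise disjoint and cover $V$, and a tree $T_k\subset G$ with $V_{T_k}=M_k$ for each $k$. Its edges come from fixed shortest paths $p_w$ from $v_k$ to $w$, whose vertices lie in $M_k$ by the Claim. Put $H=T_1+\dots+T_n$. The vertex sets are disjoint, so $H$ is a genuine disjoint union with $V_H=V$, and it is a subgraph of $G$. Connectedness of $G$ is not needed here, because the sets $V_w$ are defined through $\lambda$, which exists for any graph carrying a burning.

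The key step is to show that in $T_k$ we have $d_{T_k}(v_k,w)=\chi(w)=d_G(v_k,w)$ for every $w\in M_k$. By construction $w$ is joined by an edge to the predecessor of $w$ on $p_w$, and that predecessor lies in $M_k$ at $G$-distance $\chi(w)-1$. Induction on $\chi$ then gives $d_{T_k}\le\chi$, and $d_{T_k}\ge d_G$ holds because $T_k\subset G$. We then compute the filtration (\ref{3.1})--(\ref{3.2}) for $H$ with the same sequence $S_G$. Inside $H$, the source $v_k$ reaches only its own component, so $N^H_{j-k}(v_k)=\{w\in M_k : \chi(w)\le j-k\}$.

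Several checks follow.
\begin{enumerate}
\item \emph{Sources are not yet burned.} Each $N^H_{j-k}(v_k)\subset N^G_{j-k}(v_k)$, so $U^H_j\subset U^G_j$. Hence $v_j\notin U^H_j$ for $2\le j\le n$.
\item \emph{Everything burns.} Any $w\in M_k$ has $\lambda(w)=k+\chi(w)$. If $w\neq v_k$, then $w\in N^H_{\lambda(w)-k}(v_k)\subset U^H_{\lambda(w)}$ with $\lambda(w)\le n+1$, so $U^H_{n+1}=H$.
\item \emph{Burning times agree.} Since $U^H_j\subset U^G_j$, the burning time in $H$ is at least $\lambda(w)$, and the previous point gives at most $\lambda(w)$. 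Thus $\lambda_H=\lambda|_V$.
\end{enumerate}

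For the homomorphism property we use Corollary \ref{cor:l3.4}, which asks that $|\lambda_H(x)-\lambda_H(y)|=1$ across every edge of $H$. Every edge of $T_k$ joins some $w$ to its predecessor on $p_w$, where the $\chi$-values, and hence the $\lambda$-values, differ by exactly $1$. This uses (\ref{6.8}) for the vertices of $p_w$.

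The main obstacle is that the edge-set definition in the proof of the theorem is stated loosely. We must fix the definition of $T_k$ precisely, with each $w\in M_k\setminus\{v_k\}$ contributing exactly one edge to a vertex of $M_k$ with $\chi$ one smaller. Once that is fixed, the tree property, the equality $d_{T_k}=\chi$, and the homomorphism property all follow together by induction on $\chi$.
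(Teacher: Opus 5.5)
Your proof is correct and takes the same route as the paper: keep the trees $T_k$ on the sets $M_k$ from the proof of Theorem~\ref{thm::spanning tree}, and use $\lambda=k+d(v_k,\cdot)$ on $M_k$ to see that no edge of $H$ joins two vertices with equal burning time. The paper calls the burning-sequence part clear and settles the last step by contradiction via Lemma~\ref{l:onePath}. You instead check that part and $\lambda_H=\lambda|_V$ explicitly, and argue directly that each tree edge joins consecutive $\chi$-levels. Your equality $d_{T_k}(v_k,\cdot)=d_G(v_k,\cdot)$ is exactly what the paper's appeal to the tree property tacitly needs.
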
 
\begin{proof}
The fact that $S_G$ is the burning sequence of a burning of 
$H=(V_H, E_H)$ is clear. Assume there exists two adjacent vertices $v,w\in \lambda_H$ such that $\lambda(v)=\lambda(w)$. Be definition of $H$, $v, w$ are in the same $ T_i$ and, hence, $v,w\in M_i$. Moreover, $d(v_i,v)+i=\lambda(v)=\lambda(w)=d(v_i,w)+i$, i.e. $d(v_i,w)=d(v_i,v)$ and $v\sim w$, which is impossible in a tree due to Lemma~\ref{l:onePath}.
\end{proof}

\section{Burning homomorphisms and trees} \label{S4}

In this section we study burning homomorphisms $\lambda \colon G\to P_T$ of graphs and their properties for trees.

\begin{lemma}\label{lem::ge3}
 Let $G=(V,E)$ be a  graph and $S_G$ be a burning sequence $(v_1, \dots, v_n)$ of a burning homomorphism $(\lambda, S_G)$ of $G$. Then $d(v_i, v_j)\ge 3$ for any $v_i,v_j\in S_G$. 
\end{lemma}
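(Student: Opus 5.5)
The plan is to combine the separation of sources already forced by Definition \ref{def::burningSeq} with a parity obstruction coming from Corollary \ref{cor:l3.4}. Take two distinct sources $v_i,v_j\in S_G$; by symmetry I assume $i<j$. I will show first that $d(v_i,v_j)\ge 2$ holds for every burning, and then that distance exactly $2$ is impossible when $\lambda$ is a homomorphism.

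\emph{Step 1: $\lambda(v_k)=k$ and $d(v_i,v_j)\ge 2$.} By (\ref{3.1}) we have $v_k\in\mathcal N_k$. By Definition \ref{def::burningSeq}, $v_k\notin U_k$, and diagram (\ref{3.3}) gives $\mathcal N_{k-1}\sqsubset U_k$, so $v_k\notin\mathcal N_{k-1}$. Since the $\mathcal N_l$ are nested, (\ref{3.5}) yields $\lambda(v_k)=k$; this is also used in the proof of Theorem \ref{thm::spanning tree}. As there, $v_j\notin U_j$ implies $v_j\notin N_{j-i}(v_i)$, so $d(v_i,v_j)>j-i\ge 1$, i.e. $d(v_i,v_j)\ge 2$.

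\emph{Step 2: distance $2$ is impossible.} Suppose $d(v_i,v_j)=2$. Since $d(v_i,v_j)>j-i$, we get $j-i<2$, hence $j=i+1$. Let $u$ be the middle vertex of a shortest path, so $u$ is adjacent to both $v_i$ and $v_{i+1}$. By Corollary \ref{cor:l3.4}, $\lambda(u)\in\{i-1,i+1\}$ because $\lambda(v_i)=i$, and $\lambda(u)\in\{i,i+2\}$ because $\lambda(v_{i+1})=i+1$. These sets are disjoint, which is a contradiction. Hence $d(v_i,v_j)\ge 3$.

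Equivalently, Step 2 is a parity argument. Along any path of length $d$, a homomorphism to $P_{\mathbf T}$ changes $\lambda$ by exactly $\pm1$ at each edge, so $\lambda(v_j)-\lambda(v_i)\equiv d \pmod 2$; this rules out $d=2$ together with $j-i=1$. The only point needing care is the identity $\lambda(v_k)=k$, which rests on the inclusion $\mathcal N_{k-1}\sqsubset U_k$ from (\ref{3.3}). The rest is immediate, so I expect no real obstacle.
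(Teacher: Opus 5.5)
Your proof is correct and uses the same two ingredients as the paper: the separation $d(v_i,v_j)>j-i$ coming from $v_j\notin U_j$, and the homomorphism condition at the middle vertex of a length-$2$ path. The only difference is the order of the steps. The paper first pins $\lambda(w)=i+1$ (ruling out $i-1$ via $v_i\notin U_i$), deduces $j=i+2$, and then contradicts $v_{i+2}\notin N_2(v_i)$. You instead use the separation first to force $j=i+1$ and get an immediate parity clash, which spares you the step of excluding $\lambda(w)=i-1$.
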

\begin{proof}
Without loss of generality assume $i<j$. From the definition of burning process immediately follows that $d(v_i, v_j)\ge 2$.   Assume $d(v_i, v_j)=2$, i.e. there is  path $v_i, a_1, w, a_2, v_j$ and $w$ is not a burning source. Further,  $\lambda(w)\ne i$ and $\lambda(w)\ne j$ by Corollary \ref{cor:l3.4}. Moreover, $\lambda(w)\ne i-1$ by definition of burning process ($v_i \not \in U_i $). Hence , by Lemma  \ref{l3.4} and since $i<j$, we get $\lambda(v_i)=i$, $\lambda(w)=i+1$, $j=\lambda(v_j)=i+2$, which contradicts the fact that $(v_j)$ is a burning source (i.e. $v_{i+2}\not\in U_{i+2}\supset N_2(v_i)$). 
\end{proof}

\begin{theorem}\label{thm:pathHom} Any path graph $P_n$  admits a burning homomorphism.

\end{theorem}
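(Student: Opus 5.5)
The plan is to write down an explicit burning sequence for $P_n$ and then check two things: that it is a burning sequence in the sense of Definition \ref{def::burningSeq}, and that adjacent vertices always receive burning times of different parity. By Lemma \ref{l3.4}, adjacent burning times differ by $0$ or $1$, so different parity forces a difference of exactly $1$. Corollary \ref{cor:l3.4} then gives a homomorphism. The sources will be spaced three apart, which is consistent with Lemma \ref{lem::ge3}.

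\emph{Construction.} For $n\le 2$ the sequence $(1)$ works: $U_2=N_1(1)=P_n$, and $\lambda(i)=i$ is a homomorphism. For $n\ge 3$, fix an offset $a\in\{1,2\}$ and a number of sources $m$, and put
$$v_j=a+3(j-1), \qquad 1\le j\le m.$$
Every vertex $i$ then gets the burning time
$$\lambda(i)=\min_k\bigl(k+|i-v_k|\bigr),$$
where the minimum runs over $k$ with $k+|i-v_k|\le m+1$.

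\emph{Parity step.} This is the key observation. We have $k+|i-v_k|\equiv k+i+v_k \equiv i+a+1 \pmod 2$, because $v_k\equiv a+k+1 \pmod 2$ (indeed $v_k=a+3k-3$, and $3k-3\equiv k+1$). So every candidate value in the minimum, and hence $\lambda(i)$ itself, is congruent to $i+a+1$ modulo $2$. Adjacent vertices $i,i+1$ therefore get times of different parity. I expect this to need only a short justification that $\lambda$ is indeed given by the displayed min-formula, which follows from (\ref{3.1}), (\ref{3.5}) and the fact that $d(i,j)=|i-j|$ in $P_n$.

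\emph{Validity of the sequence.} For $k<j$ we have $d(v_j,v_k)=3(j-k)>j-k$, so $v_j\notin N_{j-k}(v_k)$, and hence $v_j\notin U_j$. It remains to check $U_{m+1}=P_n$. At stage $m+1$, source $k$ has radius $m+1-k\ge 1$, so it covers $v_k-1,\,v_k,\,v_k+1$, and $v_k+2=v_{k+1}-1$ is covered by source $k+1$. Vertex $1$ is covered either as $v_1$ or as $v_1-1$. Hence $U_{m+1}$ contains every vertex from $1$ up to $v_m+1=a+3m-2$.

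\emph{Choosing $a$ and $m$.} The step I expect to be the main (though mild) obstacle is choosing $a$ and $m$ so that $v_m\le n$ and $a+3m-2\ge n$ both hold; the case split is by $n \bmod 3$.
\begin{itemize}
\item If $n\equiv 0 \pmod 3$, take $a=2$ and $m=n/3$. Then $v_m=n-1$ and the coverage reaches $n$.
\item If $n\equiv 1 \pmod 3$, take $a=1$ and $m=(n+2)/3$. Then $v_m=n$.
\item If $n\equiv 2 \pmod 3$, take $a=1$ and $m=(n+1)/3$. Then $v_m=n-1$ and the coverage reaches $n$.
\end{itemize}
In each case every vertex is covered and all sources lie in $P_n$. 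Combining the validity check with the parity step and Corollary \ref{cor:l3.4} completes the argument.
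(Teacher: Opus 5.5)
Your proposal is correct and uses exactly the paper's sequences: $(1,4,7,\dots)$ when $n$ is not divisible by $3$, and $(2,5,8,\dots)$ when $3$ divides $n$. The only difference is in the verification. The paper writes out $\lambda$ on each residue class mod $3$ and reads off $|\lambda(i)-\lambda(i+1)|=1$. Your parity observation, that $k+|i-v_k|\equiv i+a+1 \pmod 2$ for every source, combined with Lemma \ref{l3.4}, gets there more cleanly, and you also check the conditions $v_j\notin U_j$ and $U_{m+1}=P_n$ that the paper leaves implicit.
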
 

\begin{proof}
Let  $P=P_n$ be a path graph with  vertices $1, \dots n$ and edges $\{i,{i+1}\}$, $i=1,\dots,n-1$. 

If $n$ is not divisible by 3,
consider the burning sequence $S_{P}=(1, 4, 7,  \dots, 3k+1, \dots)$. Then the corresponding burning map is a burning homomorphism, since for any vertex $v=1, \dots n$ it maps it to:
\begin{itemize}

\item
$k+1$, if $v=3k+1$,
\item
$k+2$, if $v=3k+2$,
\item
$k+2$, if $v=3k$,
\end{itemize}
i.e. $|\lambda(i) - \lambda(i+1)| = 1$ for all $i=1,\dots,n-1$ and $\lambda$ is burning homomorphism by Corollary \ref{cor:l3.4}.

If $n$ is divisible by 3, consider the burning sequence $S_{P}=(2, 5, 8,  \dots, 3k+2, \dots)$. Then the corresponding burning map is a burning homomorphism, since for any vertex $v=1, \dots n$ it maps it to:
\begin{itemize}
\item
$k+1$, if $v=3k+2$,
\item
$k+2$, if $v=3k+1$,
\item
$k+1$, if $v=3k$,

\end{itemize}
i.e. $|\lambda(i) - \lambda(i+1)| = 1$ for all $i=1,\dots,n$ also in this case and $\lambda$ is burning homomorphism by Corollary \ref{cor:l3.4}. \end{proof}

\begin{example}\label{ex:noHom1} In this  example we show that not every tree admits a burning homomorphism and from tree admitting a burning homomorphism does not follow that its subtree admits a burning homomorphism. 

Let us consider a graph $T$  as in Figure \ref{Fig::noHom}, i.e. 
$$
V_T=\{1,2,3,4,5,6\}, E_T=\{\{1,3\}, \{2,3\}, \{3,4\},\{4,5\},\{4,6\}\}.
$$
\begin{figure}[H]
\centering
\begin{tikzpicture}


\node (1) at  (2.7, 2.4) {$\bullet$};
\node (1b) at  (2.4, 2.4) {$2$};
\node (2) at  (2.7, 4.8) {$\bullet$};
\node (2b) at  (2.4, 5) {$1$};
\node (3) at  (4, 3.9) {$\bullet$};
\node (3b) at  (4, 3.6) {$3$};
\node (4) at  (6, 3.9) {$\bullet$};
\node (4b) at  (6, 3.6) {$4$};
\node (5) at  (7.3, 2.4) {$\bullet$};
\node (5b) at  (7, 2.4) {$6$};
\node (6) at  (7.3, 4.8) {$\bullet$};
\node (6b) at  (7, 5) {$5$};

\draw (1) edge[ color=black!120, thick, -] (3);
\draw (2) edge[ color=black!120, thick, -] (3);
\draw (3) edge[ color=black!120, thick, -] (4);
\draw (4) edge[ color=black!120, thick, -] (5);
\draw (4) edge[ color=black!120, thick, -] (6);

\end{tikzpicture}
  \caption{The tree $T$ for which burning homomorphism does not exist.}
\label{Fig::noHom}
\end{figure}
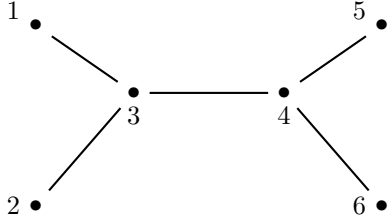
 Now we prove  that $T$ does not admit a burning homomorphism. 
If at the first moment we burn the vertex $1$, then to corresponding burning be a homomorphism we should burn vertex $5$ or $6$ at the time $t=2$, which leads to the fact that only vertex $6$ or $5$ resp. would be left for the time $t=3$, while $\lambda(4)=3$, and, hence, $\lambda$ is not a burning homomorphism.

If at the first moment we burn the vertex $3$, then at the moment $t=2$ we can burn either vertex $5$ or $6$, and the corresponding burning will be not a homomorphism, since  $\lambda(4)=2$ too.

For the burning sequences, starting from any other vertex, we will come to the same results up to isomorphism of $T$.

Note that by the definition of burning in \cite{Bonato_0} the burning sequence $(1, 3, 4)$ is allowed for $T$ and gives a burning homomorphism. This again emphasizes  the difference between our definition and definition by Bonato and coauthors.

Further, let us consider its ambient tree $Q$ (a tree, which contains $T$ as a subtree) as in Figure \ref{Fig::noHomSub}, i.e. $V_Q=V_T\cup\{7\}, E_Q=E_T\cup\{\{6,7\}\}$. 
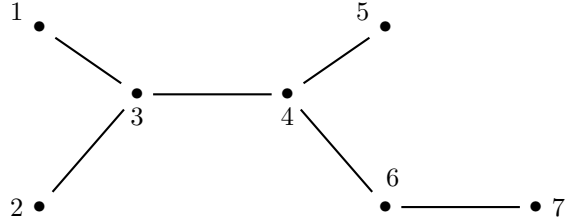
\begin{figure}[H]
\centering
\begin{tikzpicture}

\node (1) at  (2.7, 2.4) {$\bullet$};
\node (1b) at  (2.4, 2.4) {$2$};
\node (2) at  (2.7, 4.8) {$\bullet$};
\node (2b) at  (2.4, 5) {$1$};
\node (3) at  (4, 3.9) {$\bullet$};
\node (3b) at  (4, 3.6) {$3$};
\node (4) at  (6, 3.9) {$\bullet$};
\node (4b) at  (6, 3.6) {$4$};
\node (5) at  (7.3, 2.4) {$\bullet$};
\node (5b) at  (7.4, 2.8) {$6$};
\node (7) at  (9.3, 2.4) {$\bullet$};
\node (7b) at  (9.6, 2.4) {$7$};
\node (6) at  (7.3, 4.8) {$\bullet$};
\node (6b) at  (7, 5) {$5$};

\draw (1) edge[ color=black!120, thick, -] (3);
\draw (2) edge[ color=black!120, thick, -] (3);
\draw (3) edge[ color=black!120, thick, -] (4);
\draw (4) edge[ color=black!120, thick, -] (5);
\draw (4) edge[ color=black!120, thick, -] (6);
\draw (5) edge[ color=black!120, thick, -] (7);

\end{tikzpicture}
  \caption{The ambient tree $Q$ for which burning homomorphism exists.}
\label{Fig::noHomSub}
\end{figure}
Then $Q$ admits a burning homomorphism, given by a burning sequence $S=(3,7)$.
\end{example}

\begin{theorem}
For any connected graph $G=(V, E)$ with $|V|>2$ there exists a graph burning, which is not a homomorphism.
\end{theorem}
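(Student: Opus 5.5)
Our plan is to exhibit, for any connected graph $G$ with $|V|>2$, a single explicit burning sequence whose burning map sends two adjacent vertices to the same value. By Corollary~\ref{cor:l3.4}, this shows that the map is not a homomorphism.

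Since $G$ is connected and has at least three vertices, it contains a path $u,a_1,x,a_2,y$ of length $2$. Such a path always exists: take any vertex $x$ of degree at least $2$. If every vertex had degree at most $1$, a connected graph would have at most two vertices. So $x$ has two distinct neighbours $u$ and $y$.

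We then start the burning with the sequence $(v_1,v_2,\dots)=(x,\dots)$ and aim to make $u$ and $y$ burn simultaneously with one of their neighbours. Two cases arise.

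\textbf{Case 1: $u$ and $y$ are adjacent.} Take $v_1=x$. Then $\lambda(x)=1$ and $\lambda(u)=\lambda(y)=2$, since both lie in $N_1(x)\subset U_2$. Complete $(x)$ to a burning sequence in any admissible way. This is always possible: at each step $j$, either $U_j=G$ and we stop, or we choose $v_j\notin U_j$. Because $\lambda(u)=\lambda(y)=2$ is already determined, the adjacent vertices $u,y$ have equal burning times.

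\textbf{Case 2: $u$ and $y$ are not adjacent.} Choose $v_1=u$ and then try $v_2=y$. This is admissible, since $d(u,y)=2$ and hence $y\notin N_1(u)=U_2$. Then $\lambda(u)=1$, $\lambda(y)=2$ and $\lambda(x)=2$ because $x\in N_1(u)$. So the adjacent vertices $x$ and $y$ both have burning time $2$, and the map is not a homomorphism. Again complete the sequence arbitrarily to a burning sequence of $G$; the values at $u,x,y$ do not change, because $\lambda$ is determined by the first time a vertex enters $\mathcal N_i$. In fact the same choice also works in Case 1, so we may treat both cases uniformly with $v_1=u$, $v_2=y$ whenever $uy\notin E$.

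The main point to check is that an admissible initial segment can always be extended to a full burning sequence without altering the burning times already fixed. This follows because the filtration (\ref{3.3}) is increasing and $\lambda(v)=\min\{i\mid v\in\mathcal N_i\}$. A possible subtlety arises when $G=\mathcal N_2$ already, for instance when $U_3=G$ with sequence $(u,y)$. That is still a valid burning with $\mathbf T\in\{2,3\}$ by Theorem~\ref{t3.5}, so no obstacle remains.
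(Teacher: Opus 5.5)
Your proof is correct and is essentially the paper's argument: for non-adjacent $u,y$ you burn $(u,y)$ exactly as the paper burns $(w,z)$, so the middle vertex and the second source both get burning time $2$. The deviations are minor---in the triangle case you give an explicit witness (burning the apex $x$) where the paper cites the odd-cycle obstruction \cite[Theorem 3.10]{MMBurning2025}, and you justify the extension greedily (it terminates since $U_{j+1}\supsetneq U_j$) instead of citing \cite[Lemma 4.5]{MMBurning2025}; your aside that $(u,y)$ also works in Case~1 is wrong, since there $y\in N_1(u)=U_2$ (though $v_1=u$ alone would do), but nothing depends on it.
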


\begin{proof}
Let $v, w\in V$ are adjacent and fixed. Since graph is connected, at least one of the vertices $v, w$ has another adjacent vertex $z$. Assume w.l.o.g. that $v\sim z, z\ne w$. If $z\sim w$, then there is no a burning homomorphism by \cite[Theorem 3.10]{MMBurning2025}. By \cite[Lemma 4.5]{MMBurning2025}, there is an extension of burning $(w,z)$ to the whole graph $G$. Then for this extension $\lambda(z)=\lambda(v)=2$, i.e. it is not a homomorphism.
\end{proof}

\begin{remark}\label{r4.5} Let $T\subset Q$ be a pair of trees. It follows from \cite[Theorem 4.12]{MMBurning2025}  that any burning of $T$  can be extend 
to a burning of $Q$. For the the case of burning homomorphism on $T$, the similar statement does not hold in general. In fact,  not every tree has a burning homomorphism, but every tree has a  subtree $P_2$, which has a burning homomorphism.
\end{remark}

\section{End time of burnings}\label{S5}

In this section we describe possible end time of trees burnings in the case of burning homomorphisms. In particular, we obtain  sharp lower and upper bounds of the burning homomorphism's end time for path graphs $P_n,n\in \Bbb N$.

\begin{lemma}\label{lem::homT=n+1}
 Let $G=(V,E)$ be a  graph and $S_G$ be a burning sequence $(v_1, \dots, v_n)$ of a burning $(\lambda, S_G)$ of $G$ with the end time ${\mathbf T}$. If  $\lambda$ is a burning homomorphism, then ${\mathbf T}=n+1$.
\end{lemma}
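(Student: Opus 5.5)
By Theorem \ref{t3.5}, the end time satisfies ${\mathbf T}\in\{n,n+1\}$. So it suffices to rule out ${\mathbf T}=n$ when $\lambda$ is a homomorphism, and I will argue by contradiction. Suppose ${\mathbf T}=n$. Then every vertex $w\in V$ satisfies $\lambda(w)\le n$, so $\mathcal N_n=G$. In particular $U_{n+1}=\mathcal N_n$, and no vertex gets burning time $n+1$.

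The key step is to look at the last source $v_n$. By Definition \ref{def::burningSeq} we have $v_n\notin U_n$, and $v_n\in\mathcal N_n$, so $\lambda(v_n)=n$. Now take a neighbour $w$ of $v_n$. We may assume $G$ is connected, as is implicit throughout; otherwise $v_n$ could be isolated, and the burning definition would force every component to be treated separately. Since $\lambda$ is a homomorphism, Corollary \ref{cor:l3.4} gives $|\lambda(w)-\lambda(v_n)|=1$. Because $\lambda(w)\le {\mathbf T}=n$, this forces $\lambda(w)=n-1$. Then $w\in\mathcal N_{n-1}$, and by (\ref{3.1}) and (\ref{3.2}) $N_1(\mathcal N_{n-1})\subset U_n$. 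Indeed, each summand $N_{n-1-k}(v_k)$ of $\mathcal N_{n-1}$ enlarges to $N_{n-k}(v_k)$, and $N_0(v_{n-1})$ enlarges to $N_1(v_{n-1})$. Hence $v_n$, being adjacent to $w$, lies in $U_n$, which contradicts $v_n\notin U_n$.

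It remains to treat the degenerate cases. If $n=1$, then $\mathbf T=1$ would mean $G=\{v_1\}$ is a single vertex. In that case there are no edges, so the burning map is trivially a homomorphism; but $U_2=N_1(v_1)=G$, so $\mathbf T$ should be read as $2=n+1$ by the convention $\mathcal N_{n+1}=U_{n+1}$. I would check this convention against the proof of Theorem \ref{t3.5} rather than assume it. For $n\ge2$ with $G$ connected and $|V|\ge2$, $v_n$ has a neighbour and the argument above applies.

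The main obstacle is making precise the inclusion $N_1(\mathcal N_{n-1})\subseteq U_n$, that is, that a neighbour of a vertex burned by time $n-1$ is burned by time $n$ without counting the new source. This is a direct unwinding of (\ref{3.1}) and (\ref{3.2}) together with the triangle inequality for $d_G$, and it is essentially the content of Lemma \ref{l3.4}.
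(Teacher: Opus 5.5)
Your main argument is the paper's argument. Assuming $\mathbf T=n$, a neighbour $w$ of $v_n$ has $\lambda(w)\in\{n-1,n\}$. The value $n-1$ would put $v_n$ into $U_n$, since $N_1(\mathcal N_{n-1})\subset U_n$. The value $n$ contradicts Corollary~\ref{cor:l3.4}. The only difference is that you invoke the homomorphism condition first, whereas the paper invokes it last.

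What is wrong is your treatment of the degenerate case. For $G$ a single vertex, the convention $\mathcal N_{n+1}=U_{n+1}$ does not change $\lambda$: $\lambda(v_1)=\min\{i\mid v_1\in\mathcal N_i\}=1$. By Theorem~\ref{t3.5}, $\lambda$ is surjective onto $\{1,\dots,\mathbf T\}$, so $\mathbf T=1=n$, and $\lambda\colon G\to P_1$ is vacuously a homomorphism.

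The disconnected case fails in the same way. Take vertices $a,b,c$ with the single edge $\{a,b\}$ and $S_G=(a,c)$. Then $c\notin U_2=\{a,b\}$ and $U_3=G$, so $S_G$ is a burning sequence. It gives $\lambda(a)=1$ and $\lambda(b)=\lambda(c)=2$, which is a homomorphism with $\mathbf T=2=n$.

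So these cases are counterexamples to the statement as written; no convention makes them ``read'' as $\mathbf T=n+1$. The lemma needs the hypothesis that $v_n$ has a neighbour, for example that $G$ is connected with $|V|\ge 2$. The paper's proof uses this tacitly when it says ``let $w$ be any fixed vertex, adjacent to $v_n$''. You should state that hypothesis explicitly rather than appeal to a convention.
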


\begin{proof}
Assume ${\mathbf T}=n$. Let $w$ be any fixed vertex, adjacent to $v_n$. Hence since $\lambda$ is a graph map to the path graph $P_{\mathbf T}, {\mathbf T}=n$ we have $\lambda(w)\in \{n, n-1\}$. If $\lambda(w)=n-1$, then $w\in  N_{n-i-1}(v_i)$ for some burning source $v_i\ne v_n$. Hence, $v_n\in N_{n-i}(v_i)\subset U_{n}$  which contradicts the Definition \ref{def::burningSeq} of a burning sequence, where $v_n\not\in U_n$. Therefore, $\lambda(w)=n=\lambda(v_n)$ and $\lambda$ is not a homomorphism by Corollary \ref{cor:l3.4}. Hence, by Theorem \ref{t3.5} we have ${\mathbf T}=n+1$.   
\end{proof}

\bigskip

 The next result Theorem \ref{thm::homPath} concerns existence and end time of burning homomorphism of disjoint union of a graph and a tree. In some sense this result can be considered as an attempt to  inverse the point of view on the burning process, since by Corollary \ref{cor:cupTi} any burning process defines a disjoint union of trees with burning homomorphisms on them.

We remind that for any graph  $G = (V, E)$  its {\em radius} $r=r(G)$ is defined as
$$
r(G)=\min_{v\in V} \max_{u\in V} d(v,u).
$$
The {\em center } of $G$ is the set of vertices $O=O(G)$ defined by
$$
O(G)=\{v\in V \;\mid\;\max_{u\in V} d(v,u)=r(G)\}.
$$

\begin{theorem}\label{thm::homPath}
Let $G = (V_G, E_G)$ be a graph and $T= (V_T , E_T)$ be a tree. Let $\lambda_G$ be a burning homomorphism of $G$ with a burning sequence $S_G=(v_1,\dots, v_n)$. If the radius of $T$ satisfies $r(T)\le n+1$, then, for any fixed vertex  $v_0\in O(T)$, the sequence $S_{G+T}=(v_0, v_1,\dots, v_n)$, defines a burning homomorphism $\lambda_{G+T}$ of the graph $G+T$.
\end{theorem}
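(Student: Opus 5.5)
The plan is to compute the new burning function explicitly, using the fact that in the disjoint union $G+T$ the distance between a vertex of $G$ and a vertex of $T$ is $\infty$. Write the new sequence as $S_{G+T}=(u_1,\dots,u_{n+1})$ with $u_1=v_0$ and $u_{j+1}=v_j$ for $1\le j\le n$. Denote by $U'_j$ and $\mathcal N'_j$ the subgraphs (\ref{3.2}) and (\ref{3.1}) built from $S_{G+T}$, and by $U_j$ and $\mathcal N_j$ those built from $S_G$.

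\emph{First step: compare the subgraphs.} For every $j$ I would check the splitting
$$
U'_{j+1}=\bigl(U'_{j+1}\cap T\bigr)+\bigl(U'_{j+1}\cap G\bigr),\qquad U'_{j+1}\cap T=N_j(v_0),\qquad U'_{j+1}\cap G=N_{j-1}(v_1)\Cup\dots\Cup N_1(v_{j-1})=U_j .
$$
The neighborhoods of $v_0$ stay inside $T$, and the neighborhoods of the $v_i$ stay inside $G$. Only the indices shift by one.

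\emph{Second step: $S_{G+T}$ is a burning sequence.} There are two conditions in Definition \ref{def::burningSeq}.

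\begin{itemize}
\item Sources are not already burned. We need $u_{j+1}=v_j\notin U'_{j+1}$ for $1\le j\le n$. This follows from $v_j\notin T$ and $v_j\notin U_j$. The case $j=1$ is trivial, since $U_1$ is empty on the $G$ side.
\item Everything is burned at the end. We need $U'_{n+2}=G+T$. On the $G$ side, $U'_{n+2}\cap G=U_{n+1}=G$. On the $T$ side, $U'_{n+2}\cap T=N_{n+1}(v_0)$. Since $v_0\in O(T)$, every $u\in T$ satisfies $d(v_0,u)\le r(T)\le n+1$, so $N_{n+1}(v_0)=T$. This is the only place where the radius hypothesis enters.
\end{itemize}

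\emph{Third step: identify $\lambda_{G+T}$.} From the definition of the filtration (\ref{3.1}) and of $\lambda$ in (\ref{3.5}), one has $\lambda(w)=\min_k\bigl(k+d(u_k,w)\bigr)$. This is the formula already implicit in the proof of Theorem \ref{thm::spanning tree}. Using the splitting from the first step, this gives:
\begin{itemize}
\item $\lambda_{G+T}(w)=1+d(v_0,w)$ for $w\in V_T$;
\item $\lambda_{G+T}(w)=\lambda_G(w)+1$ for $w\in V_G$.
\end{itemize}

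\emph{Fourth step: $\lambda_{G+T}$ is a homomorphism.} I would conclude with Corollary \ref{cor:l3.4}. Every edge lies entirely in $G$ or entirely in $T$.
\begin{itemize}
\item For an edge $\{x,y\}$ in $G$, $|\lambda_{G+T}(x)-\lambda_{G+T}(y)|=|\lambda_G(x)-\lambda_G(y)|=1$, because $\lambda_G$ is a homomorphism.
\item For an edge $\{x,y\}$ in $T$, by Lemma \ref{l:onePath} the unique path from $v_0$ to one of $x,y$ passes through the other. Hence $|d(v_0,x)-d(v_0,y)|=1$.
\end{itemize}
It is also worth confirming consistency of the end time. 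By Lemma \ref{lem::homT=n+1}, $\lambda_G$ takes values up to $n+1$, so the values on $G$ reach $n+2$. On $T$ the values are at most $r(T)+1\le n+2$. So the end time is $n+2$, in agreement with Lemma \ref{lem::homT=n+1} applied to the new sequence of length $n+1$.

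\emph{Main obstacle.} The most delicate point is the third step. One must justify carefully that $\lambda(w)=\min_k(k+d(u_k,w))$ holds and that the index shift by one is exactly right. This is routine bookkeeping from (\ref{3.1}) and (\ref{3.5}), but it needs care.
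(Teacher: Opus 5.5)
Your proof is correct and follows essentially the same route as the paper. You use the same splitting $U^{G+T}_{j}=N_{j-1}(v_0)+U^G_{j-1}$, the same step $N_{n+1}(v_0)=T$ from $v_0\in O(T)$ and $r(T)\le n+1$, and the same identity $\lambda_{G+T}=\lambda_G+1$ on $G$. On the tree component the paper appeals to Corollary~\ref{cor:cupTi}, while you compute $\lambda_{G+T}=1+d(v_0,\cdot)$ directly from $\lambda(w)=\min_k\bigl(k+d(u_k,w)\bigr)$ and use uniqueness of paths in $T$. Your version reaches the same conclusion and is slightly more self-contained.
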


\begin{proof}
Let $S_G$ defines the filtration $\{U^G_j\}_{j=1}^n$ of the graph $G$. Note, that since $G+T$ is a disjoint union, $n$-th closed neighborhood of any vertex $v\in V_{G+T}$  coinsides with its  $n$-th closed neighborhood in $G$, if $v\in V_G$ or in $T$, if $v\in T$. Then the filtration  $\{U^{G+T}_j\}_{j=2}^{n+2}$ of the graph $G+T$ defined by $S_{G+T}$ satisfies:
\begin{align*}
&U^{G+T}_2 = N_1(v_0)\\
&U^{G+T}_j =N_{j-1} (v_0)+U^G_{j-1}, \;\;\; j=3,\dots,n+2,
\end{align*}
where $N_j(v_0)$ is a coinciding $n-$th closed neigborhoods in $T$ or $G+T$.
Since $S_G$ is a burning sequence and $G+T$ is a disjoint sum, $v_{j+1}\not \in U^{G+T}_j$ for $0\le j\le n+1$. Moreover, since $S_G$ is a burning sequence, $U^G_{n+1}=G$ and, since $r(T)\le{n+1}, v_0\in O(T)$, $N_{n+1} (v_0)=T$. Hence, 
$$
U^{G+T}_{n+2}=N_{n+1} (v_0)+U^G_{n+1}=G+T.
$$
 Therefore, $S_{G+T}$ is a burning sequence by Definition \ref{def::burningSeq}. Further, due to the $G+T$ being a disjoint sum and by definition of $S_{G+T}$, we get $\lambda_{G+T}(v)=\lambda_{G}(v)+1$ for all $v\in V_G$, i.e. the restriction of $\lambda_{G+T}$ to $G$ is a homomorhism. Moreover, it is clear, that for all $v\in V_T$ and only for them the corresponding source is $v_0$, hence, $\lambda_{G+T}$ on $T$ is a burning homomorphism by  Corollary~\ref{cor:cupTi}. Finally, again since $G+T$ is a disjoint sum, we conclude that $\lambda_{G+T}$ is a burning homomorphism.
\end{proof}

\bigskip
Now let us look closer at the end times of burning process on path graphs. Let us start with some examples.
\begin{example}\label{ex::pathEndtime}
Let $P=P_n$ be a path graph with the set of vertices $1,\dots , n$ and set of edges $\{\{i,i+1\}\}_{i=1}^{n-1}$. Then there is a complete list of burning sequences,  defining  burning homomorphisms for $n\le 7$:
\begin{itemize}
\item
$S_P=(1)$  and $S_P=(2)$ for $n=2$,
\item
$S_P=(2)$ for $n=3$,
\item
$S_P=(1,4)$ and $S_P=(4,1)$ for $n=4$,
\item
$S_P=(1,4)$ and $S_P=(2,5)$ for $n=5$,
\item
$S_P=(2,5)$, $S_P=(4,1)$ and $S_P=(3,6)$ for $n=6$,
\item
$S_P=(3,6)$, $S_P=(5,2)$, $S_P=(1,4, 7)$ and $S_P=(7, 4, 1)$ for $n=7$.
\end{itemize}
\end{example}

\begin{theorem}\label{thm::bounds}
Let  $P=P_n, n\ge 2$ be a path graph and  $S_P$ be its  burning sequence, defining a burning homomrphism $\lambda_P$. Then  
$$
\ceil{\dfrac{\sqrt{4n-3}-1}{2}}\le |S_P|\le \ceil{\dfrac n3},
$$
and the equalities can be attained for any $n$ on both sides.
\end{theorem}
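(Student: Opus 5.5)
The upper bound comes from Lemma \ref{lem::ge3}. The lower bound comes from counting the blocks of Corollary \ref{cor:cupTi} together with Lemma \ref{lem::homT=n+1}. Sharpness is shown by explicit sequences. Throughout, write $k=|S_P|$ and let the vertices of $P_n$ be $1,\dots,n$.

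\emph{Upper bound.} By Lemma \ref{lem::ge3}, distinct sources of a burning homomorphism satisfy $d(v_i,v_j)\ge 3$. Order the $k$ sources along the path. Consecutive ones are then at least $3$ apart, so $3(k-1)+1\le n$. Hence $k\le \lfloor (n+2)/3\rfloor=\ceil{n/3}$. Equality holds for every $n$ by the sequences in the proof of Theorem \ref{thm:pathHom}: $(1,4,7,\dots)$ if $3\nmid n$ and $(2,5,\dots,n-1)$ if $3\mid n$. In both cases these contain exactly $\ceil{n/3}$ sources.

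\emph{Lower bound.} By Lemma \ref{lem::homT=n+1} the end time is $\mathbf T=k+1$, so $\lambda(w)\le k+1$ for every vertex $w$. By the proof of Theorem \ref{thm::spanning tree} and Corollary \ref{cor:cupTi}, $V$ splits into the sets $M_i\ni v_i$. Each $M_i$ is a subpath (the Claim shows it is connected), and every $w\in M_i$ satisfies $\lambda(w)=i+d(v_i,w)$. Let $a_i$ and $b_i$ be the distances from $v_i$ to the left and right ends of $M_i$. Then
\[
a_i,b_i\le k+1-i \quad\text{and}\quad |M_i|=a_i+b_i+1 .
\]
Summing these crude bounds gives only $n\le k^2+2k$. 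The improvement comes from the $k-1$ places where two blocks $M_i$ and $M_j$ meet along the path. Let $x\in M_i$ and $y\in M_j$ be the adjacent end vertices there, with $x$ at distance $b_i$ from $v_i$ and $y$ at distance $a_j$ from $v_j$. Corollary \ref{cor:l3.4} gives $|\lambda(x)-\lambda(y)|=1$, where $\lambda(x)=i+b_i$ and $\lambda(y)=j+a_j$. Both values are at most $k+1$ and they differ, so
\[
b_i+a_j\le (k+1-i)+(k+1-j)-1 .
\]
Each interior boundary therefore costs at least one vertex, and
\[
n=\sum_i |M_i|\le (k^2+2k)-(k-1)=k^2+k+1 .
\]
Solving $k^2+k+1\ge n$ for the integer $k$ gives $k\ge \ceil{(\sqrt{4n-3}-1)/2}$.

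\emph{Sharpness of the lower bound.} This is the step I expect to be the main obstacle. Let $k$ be the smallest integer with $k^2+k+1\ge n$. We need a sequence of $k$ sources that is a genuine burning sequence, i.e.\ $v_j\notin U_j$, that defines a homomorphism, and that covers exactly $n$ vertices. My first attempt is to arrange the blocks along the path by source index in a zigzag order such as $1,3,5,\dots,6,4,2$. At each boundary the two peaks $k+1$ and $k$ then meet, which makes $n=k^2+k+1$ attainable. The small cases in Example \ref{ex::pathEndtime} support this: $(1,4)$ for $n=4$ and $(1,4,7)$ for $n=7$ both achieve the bound.

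For $k^2-k+1<n<k^2+k+1$, I would shorten blocks one vertex at a time starting from the path's ends, keeping every peak at height $k$ or $k+1$. Each removal must preserve three things: the homomorphism property at the boundaries, the condition $v_j\notin U_j$ (which follows from $d(v_i,v_j)>|i-j|$), and the requirement that the last time $k+1$ is still reached. Verifying these inductively for a concrete layout is where most of the work lies. The cases $n\le 7$ in Example \ref{ex::pathEndtime} serve as the base and as a check on the chosen pattern.
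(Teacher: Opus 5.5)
\textbf{Gap: the lower bound is not shown to be attained for every $n$.}

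Your upper bound and its sharpness are fine. Your derivation of $n\le k^2+k+1$ is also correct. It is a self-contained replacement for the paper's appeal to Proposition 3.12 of \cite{MMBurning2025}: you count over the intervals $M_i$ and lose one vertex at each of the $k-1$ interfaces, because the two end values there are distinct and both at most $k+1$.

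The gap is the last clause of the statement, that the lower bound is attained for \emph{every} $n$, which you do not prove.
\begin{enumerate}
\item For $n=k^2+k+1$ you only assert that the zigzag layout $M_1,M_3,M_5,\dots,M_4,M_2$ works. You do not check that the resulting sources satisfy $v_j\notin U_j$, nor that the burning function they generate is the one you prescribe.
\item For $k^2-k+1<n<k^2+k+1$ you give only a plan, and as phrased it falls short. If every block-end value stays in $\{k,k+1\}$, the interior interfaces are frozen at the pair $(k+1,k)$. So only the two outer ends of the path can lose one vertex each. Reaching $n=k^2-k+2$ requires removing $2k-1$ vertices.
\end{enumerate}

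\textbf{How to complete your idea.} First use a verification criterion. Suppose $f\colon V\to\{1,\dots,k+1\}$ satisfies:
\begin{enumerate}
\item $f$ changes by exactly $1$ along every edge, and $f(v_l)=l$;
\item every $w$ satisfies $f(w)=l+d(v_l,w)$ for some $l$;
\item no $v_j$ with $j\ge 2$ has a neighbour where $f=j-1$.
\end{enumerate}
Then $(v_1,\dots,v_k)$ is a burning sequence whose burning map is $f$, a homomorphism. Indeed, $f$ is $1$-Lipschitz, so $f\le\min_l(l+d(v_l,\cdot))=\lambda$, with equality by condition 2. Also, $v_j\in U_j$ would force $l+d(v_l,v_j)=j$ for some $l<j$. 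That means a geodesic into $v_j$ along which $f$ rises by one at each step, ending at a neighbour of $v_j$ with value $j-1$, which condition 3 forbids.

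Your zigzag meets this criterion. Put end value $k+1$ on the smaller-index side and $k$ on the larger-index side of each interface, and $k+1$ at both ends of the path. For $k\ge 3$ the block sizes are $2k+1$ for $M_1$, $2(k-i)+2$ for $3\le i\le k-1$, $1$ for $M_k$, and $2k-1$ for $M_2$. For example, $(5,18,11,14)$ works on $P_{21}$.

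Then delete up to $k$ vertices beyond $v_1$ and up to $k-1$ beyond $v_2$, letting the outer values drop as low as needed. All conditions of the criterion survive on the remaining subpath. This gives every $n$ with $k^2-k+2\le n\le k^2+k+1$, which is exactly the range where $\lceil(\sqrt{4n-3}-1)/2\rceil=k$.

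The paper handles this part differently. It places a $(k-1)$-source homomorphic burning of $P_{k^2-k+1}$ from \cite{MMBurning2025} on the right, delays it by one time step, and prepends a new first source on the left. The position of that source depends on cases in $m=n-(k^2-k+1)$.
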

\begin{proof}
The upper  bound follows from Lemma \ref{lem::ge3} and the bound is attained due to the proof of Theorem  \ref{thm:pathHom}.

For the lower bound we firstly note, that by  \cite[Proposition 3.12]{MMBurning2025} 
$$
n\le k^2+k+1,
$$
where $k=|S_P|$. From where follows that 
$$
k\ge \dfrac{\sqrt{4n-3}-1}{2},
$$
and, hence, the lower bound. Now let us prove, that the equalities can be attained. Note
 that, since $k^2+k+1$ is strictly monotonically increasing function, for any $n$ there exists only one $k=k(n)\in \Bbb N$ such that
$$
(k(n)-1)^2+(k(n)-1)+1<n\le k^2(n)+k(n)+1.
$$
Obviously, 
$$
k(n)=\ceil{\dfrac{\sqrt{4n-3}-1}{2}}.
$$
Let now $m> 0$ be such that $n=(k(n)-1)^2+(k(n)-1)+1+m$. Due to Example \ref{ex::pathEndtime}, we can assume $k(n)\ge 3\; (n> 7)$.  Let us consider $Q=P_{(k(n)-1)^2+(k(n)-1)+1}$ as a subgraph of $P=P_n$ with the vertices $m+1,m+2,\dots n$. By the proofs of   \cite[Proposition 3.11 and Proposition 3.12]{MMBurning2025} the burning sequence $S_Q=(v_1,\dots,v_{k-1})$
\begin{align*}
v_1&=m+k(n)\\
v_j &= m+\left(\sum_{i=k(n)-(j-1)}^{k(n)-1} 2i\right) + k(n)- j \qquad \text{for } j \ge 2
\end{align*}
defines a burning homomorphims $\lambda_Q$ on $Q$.  Now we define the burning sequence $S_P$, depending on the value of $m$. Note, that $|S_P|=k(n)$ in all the cases below.  We denote further in this proof $k:=k(n)$.

\begin{itemize}
\item
Let $m\ge k$. Then $S_P=(m-k+1, v_1,\dots,v_{k-1})$ defines a burning homomorphism $\lambda_P$ on $P$. Indeed, we have 
\begin{align*}
\lambda_P&(i)=m-k+2-i, \;i\le m-k+1,\\
\lambda_P&(i)=i-(m-k),\; m-k+1<i\le m,\\
\lambda_P&(m+1)=k+1=\lambda_Q(m+1)+1,
\end{align*}
since $\lambda_Q(m+1)=2+d(v_1,m+1)=2+d(m+k,m+1)=k+1$, and, further,
\begin{align*}
\lambda_P(i)&=\lambda_Q(i+1)+1, i\ge m+1.
\end{align*}
The fact, that $\lambda_P$ is homomorphism follows, since there are just edges $\{i,{i+1}\}$, $i=1,\dots,n-1$ in the path graph and $\lambda_Q$ is a homomorphism.

Describing the corresponding burning in words: we keep the burning sequence on $Q$ (which is the right hand part of $P$), but start one moment later (this will  also shift $\lambda$ by one), while at the first moment we burn one vertex in the left hand part so, that value of $\lambda$ on the first vertex of $Q$ stays consistent. 
\item
Let $m< k$ and $m+k$ is even. Then $S_P=(1, v_1,\dots,v_{k-1})$ defines a burning homomorphism $\lambda_P$ on $P$. Indeed, we have 
\begin{align*}
\lambda_P&(i)=i, \;i\le (m+k)/2,\\
\lambda_P&( (m+k)/2+1)=(m+k)/2+1=\lambda_Q((m+1)/2+1)+1,
\end{align*}
since $\lambda_Q((m+k)/2)+1)=2+d(v_1,(m+k)/2+1)=2+d(m+k,(m+k)/2+1)=2+(m+k)/2-1$, and, further,
\begin{align*}
\lambda_P(i)&=\lambda_Q(i+1)+1, i\ge (m+1)/2+1.
\end{align*}
The fact, that $\lambda_P$ is homomorphism follows, since there are just edges $\{i,{i+1}\}$, $i=1,\dots,n-1$ in the path graph and $\lambda_Q$ is a homomorphism.

Describing the corresponding burning in words: we keep the burning sequence on $Q$ (which is the right hand part of $P$), but start one moment later (this will  also shift $\lambda$ by one), while at the first moment we burn the first vertex of $P_n$. Then we show that $\lambda_P$  between $1$ and $v_1$ is a homomorphism., due to $m+k$ being even. 

\item
Let $m< k$ and $m$ is odd. Then $S_P=(2, v_1,\dots,v_{k-1})$ defines a burning homomorphism $\lambda_P$ on $P$ for $k\ge 3$. The proof follows the same outline as in the previous case.
\end{itemize}
\end{proof}

\begin{corollary}
Let  $P=P_n$ be a path graph and $S_P$ be its burning sequence, defining a burning homomrphism $\lambda_P$ with the end time ${\mathbf T}$. Then  
$$
\ceil{\dfrac{\sqrt{4n-3}-1}{2}}+1\le {\mathbf T}\le \ceil{\dfrac n3}+1,
$$
and the equalities can be attained on both sides.
\end{corollary}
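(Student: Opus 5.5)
This corollary is a direct translation of Theorem~\ref{thm::bounds} from the number of sources to the end time. The link is Lemma~\ref{lem::homT=n+1}: whenever the burning map $\lambda_P$ determined by $S_P$ is a homomorphism, the end time equals the number of sources plus one. So for any burning sequence $S_P$ of $P=P_n$ that defines a burning homomorphism, we have
$$
{\mathbf T}=|S_P|+1 .
$$

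First, fix such an $S_P$ with end time ${\mathbf T}$. Lemma~\ref{lem::homT=n+1} applied to $G=P_n$ with the sequence $S_P$ gives ${\mathbf T}=|S_P|+1$. Substituting $|S_P|={\mathbf T}-1$ into the two-sided estimate of Theorem~\ref{thm::bounds} and adding $1$ to every term yields
$$
\ceil{\dfrac{\sqrt{4n-3}-1}{2}}+1\le {\mathbf T}\le \ceil{\dfrac n3}+1 .
$$

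Second, for sharpness, Theorem~\ref{thm::bounds} provides, for each $n$, burning sequences defining burning homomorphisms with $|S_P|=\ceil{n/3}$ and with $|S_P|=\ceil{(\sqrt{4n-3}-1)/2}$. The first comes from the construction in Theorem~\ref{thm:pathHom}; the second comes from the case analysis in the proof of Theorem~\ref{thm::bounds}, together with Example~\ref{ex::pathEndtime} for small $n$. Applying Lemma~\ref{lem::homT=n+1} to each of these sequences shows that their end times attain the upper and lower bounds respectively.

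There is no substantive obstacle, since all the work sits in Theorem~\ref{thm::bounds} and Lemma~\ref{lem::homT=n+1}. The only points to check are that the corollary implicitly assumes $n\ge 2$, as in Theorem~\ref{thm::bounds}, and that Lemma~\ref{lem::homT=n+1} is used in exactly the homomorphism setting of the statement, which excludes the alternative ${\mathbf T}=|S_P|$ allowed by Theorem~\ref{t3.5}.
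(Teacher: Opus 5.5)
Your proposal is correct and matches the paper's argument: the paper's proof consists only of the observation ${\mathbf T}=|S_P|+1$ from Lemma~\ref{lem::homT=n+1}, which transfers both the bounds and their sharpness from Theorem~\ref{thm::bounds}. Your caveat that $n\ge 2$ is needed is well taken: for $P_1$ the vacuous burning homomorphism has ${\mathbf T}=|S_P|=1$, so Lemma~\ref{lem::homT=n+1} fails there and the upper bound $\ceil{1/3}+1=2$ is not attained.
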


\begin{proof}
By Lemma \ref{lem::homT=n+1} we have ${\mathbf T}=|S_P|+1$.
\end{proof}

\bigskip
We finish this Section with an interesting example of the class of trees, which in some sense  burn worse, than path graphs.  More precisely, it is  known, see \cite{Bonato}, that the path graph $P_m, m={n}^2-1$ can be burned within the time $n$ with the burning sequence, satisfying $|S_P|=n-1$. Moreover, it is easy to see that there is some freedom in choosing the first vertex in the burning sequence with this time. We present an example of family of trees with $m=({n}^2-1), n\in \Bbb N$ vertices, which can be burned with the burning sequence, satisfying $|S|=n-1$ only if the burning starts from one particular vertex, while all the other burning sequences will be longer. Moreover, the corresponding burning maps are homomorphisms, while for path graphs $P_m$ there is no   burning  homomorphism with $|S_P|=n-1$, due to Theorem \ref{thm::homPath}.

\begin{example}\label{ex::T15} 
Let us define a tree $Y_m$, where $m={n}^2-1, n>1 $ with the set of vertices $1,2,\dots, m$ by firstly defining path graphs $Q_{i+1}, i=0, {n-2}$ with the set of vertices $V_{i+1}=\{ni+1, ni+n\}$ and the set of edges $E_{i+1}=\{\{j,j+1\}\}_{j=ni+1}^{ni+n-1}$, and defining path graph $Q_{n}$ as a graph with the set of vertices $V_{n}=\{n^2-n+1, n^2-1\}$ and the set of edges $E_n=\{\{j,j+1\}\}_{j=n^2+n-1}^{n^2-2}$. Now we define  the set of edges $E_{Y_m}$ as
$$
\left(\bigcup_{i=1}^{n} E_i\right)\bigcup \left(\bigcup_{i=1}^{n-1} \{1,1+ni\}\right)
$$
The corresponding graphs for $n=4\;(m=15)$ are shown  on the Figures \ref{fig::Q1Q4} and \ref{fig::T15}.
\begin{figure}[H]
\begin{tikzpicture}[scale=0.8]
\node (q1) at (1,6) {$Q_1:$};
\node (1) at (2,6) {$\bullet$};
\node (1b) at (1.7,5.7) {$1$};
\node (2) at (3.5,6) {$\bullet$};
\node (2b) at (3.5,5.7) {$2$};
\node (3) at (5,6) {$\bullet$};
\node (3b) at (5,5.7) {$3$};
\node (4) at (6.5,6) {$\bullet$};
\node (4b) at (6.5,5.7) {$4$};

\node (q2) at (1,4.5) {$Q_2:$};
\node (5) at (2,4.5) {$\bullet$};
\node (5b) at (1.7,4.2) {$5$};
\node (6) at (3.5,4.5) {$\bullet$};
\node (6b) at (3.5,4.2) {$6$};
\node (7) at (5,4.5) {$\bullet$};
\node (7b) at (5,4.2) {$7$};
\node (8) at (6.5,4.5) {$\bullet$};
\node (8b) at (6.5,4.2) {$8$};

\node (q3) at (1,3) {$Q_3:$};
\node (9) at (2,3) {$\bullet$};
\node (9b) at (1.7,2.7) {$9$};
\node (10) at (3.5,3) {$\bullet$};
\node (10b) at (3.5,2.7) {$10$};
\node (11) at (5,3) {$\bullet$};
\node (11b) at (5,2.7) {$11$};
\node (12) at (6.5,3) {$\bullet$};
\node (12b) at (6.5,2.7) {$12$};

\node (q4) at (1,1.5) {$Q_4:$};
\node (13) at (2,1.5) {$\bullet$};
\node (13b) at (1.7,1.2) {$13$};
\node (14) at (3.5,1.5) {$\bullet$};
\node (14b) at (3.5,1.2) {$14$};
\node (15) at (5,1.5) {$\bullet$};
\node (15b) at (5,1.2) {$15$};

\draw (1) edge[ color=black!120, thick, -] (2);
\draw (2) edge[ color=black!120, thick, -] (3);
\draw (3) edge[ color=black!120, thick, -] (4);

\draw (5) edge[ color=black!120, thick, -] (6);
\draw (6) edge[ color=black!120, thick, -] (7);
\draw (7) edge[ color=black!120, thick, -] (8);

\draw (9) edge[ color=black!120, thick, -] (10);
\draw (10) edge[ color=black!120, thick, -] (11);
\draw (11) edge[ color=black!120, thick, -] (12);

\draw (13) edge[ color=black!120, thick, -] (14);
\draw (14) edge[ color=black!120, thick, -] (15);
\end{tikzpicture}
\caption{Graphs $Q_1, Q_2,Q_3,Q_4$ for Example \ref{ex::T15}}
\label{fig::Q1Q4}
\end{figure}
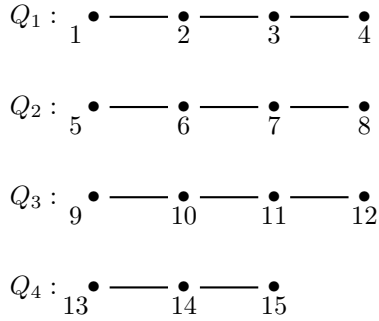

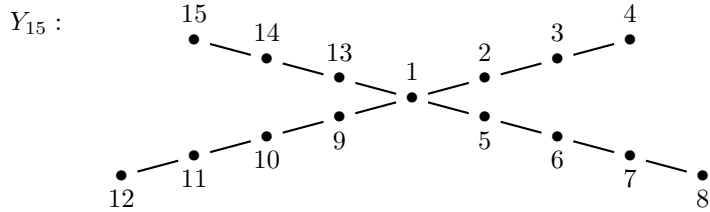
\begin{figure}[H]
\begin{tikzpicture}
\node (0) at (-5,1) {$Y_{15}:$};
\node (1) at (0,0) {$\bullet$};
\node (1b) at (0,0.35) {$1$};

\node (2) at ({cos(15)},{sin(15)}) {$\bullet$};
\node (2b) at ({cos(15)},{sin(15)+0.35}) {$2$};

\node (3) at ({2*cos(15)},{2*sin(15)}) {$\bullet$};
\node (3b) at ({2*cos(15)},{2*sin(15)+0.35}) {$3$};

\node (4) at ({3*cos(15)},{3*sin(15)}) {$\bullet$};
\node (4b) at ({3*cos(15)},{3*sin(15)+0.35}) {$4$};

\node (5) at ({cos(-15)},{sin(-15)}) {$\bullet$};
\node (5b) at ({cos(-15)},{sin(-15)-0.3}) {$5$};

\node (6) at ({2*cos(-15)},{2*sin(-15)}) {$\bullet$};
\node (6b) at ({2*cos(-15)},{2*sin(-15)-0.3}) {$6$};

\node (7) at ({3*cos(-15)},{3*sin(-15)}) {$\bullet$};
\node (7b) at ({3*cos(-15)},{3*sin(-15)-0.3}) {$7$};

\node (8) at ({4*cos(-15)},{4*sin(-15)}) {$\bullet$};
\node (8b) at ({4*cos(-15)},{4*sin(-15)-0.3}) {$8$};

\node (9) at ({cos(195)},{sin(195)}) {$\bullet$};
\node (9b) at ({cos(195)},{sin(195)-0.3}) {$9$};

\node (10) at ({2*cos(195)},{2*sin(195)}) {$\bullet$};
\node (10b) at ({2*cos(195)},{2*sin(195)-0.3}) {$10$};

\node (11) at ({3*cos(195)},{3*sin(195)}) {$\bullet$};
\node (11b) at ({3*cos(195)},{3*sin(195)-0.3}) {$11$};

\node (12) at ({4*cos(195)},{4*sin(195)}) {$\bullet$};
\node (12b) at ({4*cos(195)},{4*sin(195)-0.3}) {$12$};

\node (13) at ({cos(165)},{sin(165)}) {$\bullet$};
\node (13b) at ({cos(165)},{sin(165)+0.35}) {$13$};

\node (14) at ({2*cos(165)},{2*sin(165)}) {$\bullet$};
\node (14b) at ({2*cos(165)},{2*sin(165)+0.35}) {$14$};

\node (15) at ({3*cos(165)},{3*sin(165)}) {$\bullet$};
\node (15b) at ({3*cos(165)},{3*sin(165)+0.35}) {$15$};

\draw (1) edge[ color=black!120, thick, -] (2);
\draw (2) edge[ color=black!120, thick, -] (3);
\draw (3) edge[ color=black!120, thick, -] (4);

\draw (1) edge[ color=black!120, thick, -] (5);
\draw (5) edge[ color=black!120, thick, -] (6);
\draw (6) edge[ color=black!120, thick, -] (7);
\draw (7) edge[ color=black!120, thick, -] (8);

\draw (1) edge[ color=black!120, thick, -] (9);
\draw (9) edge[ color=black!120, thick, -] (10);
\draw (10) edge[ color=black!120, thick, -] (11);
\draw (11) edge[ color=black!120, thick, -] (12);

\draw (1) edge[ color=black!120, thick, -] (13);
\draw (13) edge[ color=black!120, thick, -] (14);
\draw (14) edge[ color=black!120, thick, -] (15);
\end{tikzpicture}
\caption{Graph  $Y_{15}$ for Example \ref{ex::T15}}
\label{fig::T15}
\end{figure}

Any  burning sequence  $S_m=(v_1, \dots,v_n)$ for $Y_m$  satisfies $v_1=1$ (otherwise there will be $n$ disconnected components left  after the first burn at any time, less than $n$, and there is no chance to have a burning sequence with $n-1$ vertices). But one can see that the sequence $S_m=(1,v_2, \dots v_{n-1})$,
where
$$
v_{i+1}=
\begin{cases}
ni+n, n+i \text{ is even }, \\
ni+n-1, n+i \text{ is odd},\\
\end{cases}
$$
gives the burning map of $Y_m$. Note, that this map is a burning homomorphism and the corresponding end time is $n$.

\end{example}

\section{Digraphs structure on burned trees}\label{S6}

Now we investigate a possibility to introduce  a natural structure of a  digraph  on a spanning tree of   a burned graph.  

We remind, that \emph{directed graph (digraph)} $G=(V_G,D_G)$ consists of a set of vertices $V_G$, and a set $D_G$ of ordered pairs $(v,w)\in D_G$ of distinct  vertices $v,w\in V_G$ called \emph{ordered edges}.  
\begin{definition}
For two digraphs $G=(V_G,D_G)$ and $H=(V_H, D_H)$ a \emph{digraph map} $f\colon G\to H$ is given by a map $f\colon V_G\to V_H$ such that 
$(v, w)\in D_G$ implies $(f(v), f(w))\in D_H$ or $f(v)=f(w)\in V_H$.  A digraph map $f$ is  a \emph{digraph homomorphism} if $(f(v), f(w))\in D_H$ for any $(v, w)\in D_G$.
\end{definition}
Let $G=(V,E)$ be a graph.  A \emph{structure of digraph on $G$} can be given 
by  equipping  with an order each edge $\{v,w\}\in E$. 

Coming back to the burning we remind, that for any graph $G=(V, E)$ any burning map $\lambda = \lambda(\mathbf B^{\mathbf T}(G))$ maps the graph onto path graph $P_{\mathbf T}$, whose vertices, if denoted by ${1, 2, \dots {\mathbf T}}$ correspond to the burning time of preimages. Hence,
in our settings a path graph 
$P = P_n, n\in \Bbb N$ has a natural structure of digraph defined by $(i, i+1)\in D_P$ for all $i = 1, \dots, n-1$.

Moreover, the following result follows almost immediately  from the proof of Theorem \ref{thm::spanning tree}.

\begin{proposition}\label{prop:digraphStruct}
 Let $T=(V_T,E_T)$ be a  tree and $(\lambda, S_T)$ be its burning. Then there is a natural structure of a directed graph on  graph $T$, induced by $\lambda$,  such that $\lambda$ is a digraph map. 
\end{proposition}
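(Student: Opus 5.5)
The plan is to orient each edge of $T$ according to the burning times of its endpoints, using Lemma \ref{l3.4} to restrict the possible cases. By Lemma \ref{l3.4}, for every edge $\{v,w\}\in E_T$ we have $|\lambda(v)-\lambda(w)|\in\{0,1\}$. So there are exactly two kinds of edges: \emph{increasing} edges, with $|\lambda(v)-\lambda(w)|=1$, and \emph{flat} edges, with $\lambda(v)=\lambda(w)$. On $P_{\mathbf T}$ we use the natural digraph structure $(i,i+1)$ recalled above.

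\emph{Increasing edges.} We orient an increasing edge from the vertex with smaller $\lambda$ to the vertex with larger $\lambda$. So if $\lambda(w)=\lambda(v)+1$, we put $(v,w)\in D_T$. Then $(\lambda(v),\lambda(w))=(i,i+1)\in D_{P_{\mathbf T}}$, as a digraph map requires. This orientation is canonical: it is the direction in which fire spreads.

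\emph{Flat edges.} For a flat edge the digraph map condition is automatic, because $\lambda(v)=\lambda(w)$. Hence any orientation works, and the only issue is making it natural. Here I would use the decomposition from the proof of Theorem \ref{thm::spanning tree}: the sets $V_w$, the function $\mu\colon V\to \widehat S_T$ taking the minimal source index, and the partition $V=\bigsqcup_k M_k$. The first step is to show that a flat edge never has both endpoints in the same $M_k$. The argument is that of Corollary \ref{cor:cupTi}. If $v,w\in M_k$, then $\lambda(v)=k+d(v_k,v)$ and $\lambda(w)=k+d(v_k,w)$, so equal $\lambda$ gives $d(v_k,v)=d(v_k,w)$. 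But in a tree, adjacent vertices have distances to $v_k$ differing by exactly $1$, since by Lemma \ref{l:onePath} the unique path to one of them passes through the other. Consequently every flat edge joins $M_k$ and $M_l$ with $k\neq l$. We orient it from the endpoint in $M_{\min(k,l)}$ to the endpoint in $M_{\max(k,l)}$, i.e.\ from the region fed by the earlier source to the region fed by the later one.

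Finally, I would check that this gives a well-defined digraph structure. Every edge receives exactly one orientation, and no pair $(v,w),(w,v)$ occurs. With this, $\lambda$ is a digraph map: increasing edges go to edges $(i,i+1)$ of $P_{\mathbf T}$, and flat edges collapse to a vertex. As a side remark, inside each $M_k$ all edges of $T$ are increasing and point away from the root $v_k$, recovering the rooted trees $T_k$ of the proof of Theorem \ref{thm::spanning tree}.

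The main obstacle is justifying naturality for the flat edges, namely that the $\mu$-based rule is intrinsic to $\lambda$ and $S_T$. This needs the step that no flat edge lies inside a single $M_k$, which rests on the tree hypothesis and the parity argument above. I would prove that step first and carefully.
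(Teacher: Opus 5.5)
Your proposal is correct and follows the paper's proof essentially verbatim. Like the paper, you orient edges with $|\lambda(v)-\lambda(w)|=1$ toward the larger burning time, and you orient flat edges from the part $M_i$ (the vertex set of $T_i$) with the smaller source index to the part with the larger one. The only difference is that you prove, via Lemma~\ref{l:onePath}, why a flat edge cannot lie inside a single $M_k$: adjacent vertices of a tree have distances to $v_k$ differing by exactly one. The paper leaves that step to the proof of Theorem~\ref{thm::spanning tree} and Corollary~\ref{cor:cupTi}.
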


\begin{proof}
Since $T$ is its own spanning tree, $V_T=V_{\cup T_i}$, where $T_i$ are defined  as in the proof  of Theorem \ref{thm::spanning tree}. Let us take any pair of vertices  $v\sim w$ in $T$. By Lemma \ref{l3.4}, $|\lambda(v)-\lambda{w}|\in \{0,1\}$. Assume $|\lambda(v)-\lambda(w)|=1$. Then  the direction should be from the vertex with the smaller $\lambda$ to the vertex with the larger $\lambda$. Assume now that $\lambda(v)=\lambda(w)$. Then by the proof  of Theorem \ref{thm::spanning tree}  we obtain $v\in T_i, w\in T_j, j\ne i$ and we can define the directed edge as $(v,w)$, if $i<j$ and $(w,v)$ otherwise. 
\end{proof}

\begin{corollary}
Let $G$ be a graph and $(\lambda, S_G)$ be its burning. Then there is a spanning tree of $G$ with natural digraph structure, which arises from $\lambda$.
\end{corollary}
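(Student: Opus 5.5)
The plan is to combine Theorem \ref{thm::spanning tree} with Proposition \ref{prop:digraphStruct}, checking only that restricting $\lambda$ to the tree still yields a legitimate burning.

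First, since $(\lambda, S_G)$ is a burning of $G$, Theorem \ref{thm::spanning tree} gives a spanning tree $T\subset G$ such that $S_G$ is a burning sequence for some burning $(\lambda_T, S_G)$ of $T$. This uses that $G$ is connected, as in Theorem \ref{thm::spanning tree}. If $G$ is not connected, I would instead work with a spanning forest. Corollary \ref{cor:cupTi} already supplies trees $T_i$ covering $V$ with $S_G$ a burning sequence of $H=T_1+\dots+T_n$. The argument below then applies componentwise.

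Second, I would record that the construction in the proof of Theorem \ref{thm::spanning tree} makes $\lambda_T$ agree with $\lambda$ on $V$. Each $w\in M_k$ satisfies $\lambda(w)=k+d_G(v_k,w)$. The tree $T_k$ is built from geodesic paths $p_w$, which by the Claim lie inside $M_k$, so $d_T(v_k,w)=d_G(v_k,w)$. Moreover, distances in $T$ are never shorter than in $G$, so no vertex burns earlier in $T$ than in $G$. Hence $\lambda_T=\lambda$, and the digraph structure produced on $T$ genuinely \emph{arises from} the original $\lambda$, not merely from some burning of $T$.

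Third, I would apply Proposition \ref{prop:digraphStruct} to the tree $T$ with the burning $(\lambda_T,S_G)$. This orients each edge of $T$ from smaller to larger value of $\lambda$. When the two endpoints have equal value, it orients the edge from the tree $T_i$ with smaller index to the one with larger index; such edges are exactly the connecting edges $a_{k,l}$. The proposition also shows that $\lambda\colon T\to P_{\mathbf T}$ is a digraph map.

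The main obstacle is the second step: verifying $\lambda_T=\lambda$ rigorously, in particular that the added edges $a_{k,l}$ create no shortcuts. I expect this to hold because adding edges can only decrease burning times, while the lower bound $\lambda_T(w)\ge\lambda(w)$ follows from $d_T\ge d_G$.
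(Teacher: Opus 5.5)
Your proposal is correct and follows the paper's route, which combines Theorem~\ref{thm::spanning tree} with Proposition~\ref{prop:digraphStruct}; your explicit check that $\lambda_T=\lambda$ (lower bound from $d_T\ge d_G$, upper bound from the geodesic trees $T_k$) supplies a detail the paper leaves implicit in the phrase ``arises from $\lambda$''. One small overstatement: edges of $T$ with equal $\lambda$-values occur only among the connecting edges $a_{k,l}$, but not every $a_{k,l}$ has equal-valued endpoints (for $P_4$ with $S=(1,4)$ one gets $M_1=\{1,2,3\}$, $M_2=\{4\}$ and $\lambda(3)=3\neq 2=\lambda(4)$), so ``exactly'' should be weakened, which does not affect the argument.
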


\begin{proof}
Follows from Theorem \ref{thm::spanning tree}.
\end{proof}

\begin{corollary} Let $T\subset Q$ be trees and 
 $(\lambda_T, S_T)$, $(\lambda_Q, S_Q)$ be their burnings such that the inclusion $i:T\subset Q$ is a morphism of burnings. Then $i$ is  agreed with the natural digraph structures induced by $\lambda_T$ and  $\lambda_Q$. 
\end{corollary}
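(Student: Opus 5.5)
We have to show that for every edge $\{x,y\}$ of $T$, its orientation in the digraph structure induced by $\lambda_T$ (Proposition~\ref{prop:digraphStruct}) coincides with the orientation of the same edge in the structure induced by $\lambda_Q$. We split into two cases according to whether $\lambda_T(x)=\lambda_T(y)$.

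\emph{Setup.} Write $S_T=(v_1,\dots,v_k)$ and $S_Q=(w_1,\dots,w_m)$. Since the inclusion $i$ is a morphism of burnings, $k\le m$ and $w_j=v_j$ for $j\le k$. Moreover there is a graph map $\tau\colon P_{T^T}\to P_{T^Q}$ with $\lambda_Q|_{V_T}=\tau\circ\lambda_T$. First I will show that $\tau$ is the restriction of the identity, i.e. $\lambda_Q|_{V_T}=\lambda_T$. Since $\lambda_T(v_j)=j=\lambda_Q(v_j)$, we get $\tau(j)=j$ for $j\le k$. If $T^T=k+1$, take a vertex $u$ with $\lambda_T(u)=k+1$; it is adjacent to a vertex of value $k$, so $\tau(k+1)\in\{k,k+1\}$. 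I will exclude $\tau(k+1)=k$ using $f(U_j)\subset W_j$ from Definition~\ref{d3.15}: $u\in U_{k+1}^T$, hence $u\in W_{k+1}$, hence $\lambda_Q(u)\le k+1$. For the lower bound I will argue directly from distances. We have $d_T(v_j,u)\ge k+1-j$ for all $j$, and $d_Q=d_T$ on $T$ because $T$ is a subtree of a tree and paths are unique (Lemma~\ref{l:onePath}). Sources $w_j$ with $j>k$ would contribute only at times $\ge j+1>k+1$. This bookkeeping, showing that $\lambda_Q$ and $\lambda_T$ agree on $V_T$ through the minimal-distance formula $\lambda(w)=\min_j\bigl(j+d(v_j,w)\bigr)$ of the proof of Theorem~\ref{thm::spanning tree}, is the step I expect to be the main obstacle. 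The contributions of the extra sources $w_j$ with $j>k$ must not lower $\lambda_Q$ on $T$, and this is exactly where the commutative square and the condition $\tau$ being a graph map with $\tau(j)=j$ have to be used.

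\emph{Case 1: $\lambda_T(x)\ne\lambda_T(y)$.} Suppose $\lambda_T(x)<\lambda_T(y)$, so the edge is oriented $(x,y)$ for $\lambda_T$. Then $\lambda_Q(x)=\lambda_T(x)<\lambda_T(y)=\lambda_Q(y)$, and the orientation $(x,y)$ agrees for $\lambda_Q$ as well. Even without full equality of $\lambda_Q$ and $\lambda_T$ on $V_T$, monotonicity of $\tau$ (as an order-respecting map agreeing with the identity on source times) already gives $\lambda_Q(x)\le\lambda_Q(y)$. The remaining degenerate possibility $\lambda_Q(x)=\lambda_Q(y)$ is handled as in Case 2.

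\emph{Case 2: $\lambda_T(x)=\lambda_T(y)$.} Here the orientation is determined by the indices $i\ne j$ of the sets $M_i\ni x$ and $M_j\ni y$, where $\mu$ selects the minimal index $k$ with $\lambda(w)=\lambda(v_k)+d(v_k,w)$. On $V_T$ we have equal $\lambda$ values, equal distances ($d_Q=d_T$ on $T$), and identical sources $w_j=v_j$ for $j\le k$. Any extra source $w_j$ with $j>k$ has $j+d(w_j,x)>k\ge\lambda$-values attained by the earlier sources whenever $\lambda_T(x)\le k$, and a similar comparison handles the case $\lambda_T(x)=k+1$. Consequently $\mu_Q(x)=\mu_T(x)$ and $\mu_Q(y)=\mu_T(y)$, so the index comparison, and hence the orientation, is the same for $\lambda_T$ and $\lambda_Q$. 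Thus $i$ is a digraph map for the two natural structures, and in fact preserves every directed edge.
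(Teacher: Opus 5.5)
Your argument is correct, and it is more explicit than the paper's. The paper's entire proof is an appeal to the commutative square $\lambda_Q\circ i=\tau\circ\lambda_T$ of the morphism of burnings. That square by itself only controls burning times, and only once one knows $\tau$ respects the order. It says nothing about edges with $\lambda_T(x)=\lambda_T(y)$, whose orientation is fixed by the indices of the trees $T_i$ from the spanning-tree construction. You instead identify $\tau$ with the identity inclusion, i.e. you prove $\lambda_Q|_{V_T}=\lambda_T$. This uses $\lambda(w)=\min_j\bigl(j+d(v_j,w)\bigr)$, $w_j=v_j$ for $j\le k$, and $d_Q=d_T$ on $V_T$ (unique paths in trees). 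You then show $\mu_Q|_{V_T}=\mu_T$, which is exactly what the equal-time edges require. The paper's version is shorter but leaves both points implicit.

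Your write-up can be tightened considerably.
\begin{itemize}
\item The square is not actually needed. For $u\in V_T$, the terms with $j\le k$ have minimum $\lambda_T(u)\le k+1$, and every term with $j>k$ is at least $j\ge k+1$. So $\lambda_Q(u)=\lambda_T(u)$ in one line.
\item In Case 2, some index $j\le k$ already satisfies $\lambda_Q(x)=j+d_Q(w_j,x)$, and $\mu_Q$ takes the \emph{minimal} such index. So the extra sources are irrelevant and no comparison of their values is needed.
\item Drop the claim $\tau(k+1)\in\{k,k+1\}$. It does not follow from $\tau$ being a graph map; only $|\tau(k+1)-k|\le 1$ does.
\item Drop the Case 1 fallback via ``monotonicity of $\tau$''. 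It is unjustified, and it would not work anyway, since a tie-break by $\mu_Q$ has no reason to reproduce the orientation $(x,y)$ dictated by $\lambda_T$.
\end{itemize}
Your Step 1 already makes both of these hedges superfluous.
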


\begin{proof}
 Follows from  commutative diagram \eqref{3.12}
 \end{proof}

\begin{proposition} Let $G=(V,E)$ be  an arbitrary graph and $\lambda = \lambda(\mathbf B^{\mathbf T}(G))$ be its burning homomorphism.  Then $G$ has a natural structure of  digraph such that the map $\lambda_G$ is digraph homomorphism. 
\end{proposition}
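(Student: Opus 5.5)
The plan is to orient every edge of $G$ by the values of $\lambda$, and then to check the two properties a digraph homomorphism needs.

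First, I invoke Corollary~\ref{cor:l3.4}. Since $\lambda=\lambda(\mathbf B^{\mathbf T}(G))$ is a burning homomorphism, every edge $\{v,w\}\in E$ satisfies $|\lambda(v)-\lambda(w)|=1$. In particular, the two endpoints of any edge always receive different values of $\lambda$. So for each edge $\{v,w\}$ there is exactly one endpoint with the smaller value, and I set
$$
D_G=\{(v,w)\,|\,\{v,w\}\in E,\ \lambda(w)=\lambda(v)+1\}.
$$

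Next, I check that this gives a structure of digraph on $G$ in the sense used above. Each unordered edge $\{v,w\}$ contributes exactly one of the ordered pairs $(v,w)$ or $(w,v)$, because the values $\lambda(v)$ and $\lambda(w)$ differ. Both pairs cannot lie in $D_G$, since that would require $\lambda(w)=\lambda(v)+1$ and $\lambda(v)=\lambda(w)+1$ simultaneously. Hence every edge is equipped with exactly one order. This orientation is natural: it is determined by $\lambda$ alone and uses no choices, unlike the tie-breaking by the trees $T_i$ in Proposition~\ref{prop:digraphStruct}, which is unnecessary here.

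Finally, I equip $P_{\mathbf T}$ with its natural digraph structure $(i,i+1)$, $i=1,\dots,\mathbf T-1$, and verify the homomorphism property. For $(v,w)\in D_G$ we have $\lambda(w)=\lambda(v)+1$, so $(\lambda(v),\lambda(w))=(i,i+1)$ with $i=\lambda(v)$. This is an ordered edge of $P_{\mathbf T}$ because $\lambda$ takes values in $\{1,\dots,\mathbf T\}$ by Theorem~\ref{t3.5}. Therefore $\lambda_G$ sends every ordered edge to an ordered edge, and it is a digraph homomorphism.

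There is no real obstacle. The only point requiring care is the exclusion of the case $\lambda(v)=\lambda(w)$ on an edge, and Corollary~\ref{cor:l3.4} supplies exactly this.
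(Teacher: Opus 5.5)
Your proof is correct and matches the paper's argument: the paper orients each edge from the smaller to the larger value of $\lambda$, as in the proof of Proposition~\ref{prop:digraphStruct}. It also uses Corollary~\ref{cor:l3.4} to exclude equal values on an edge, so the tie-breaking via the trees $T_i$ is not needed; you have simply written out the details the paper leaves implicit.
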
 
\begin{proof}
 Follows from  the proof of Proposition \ref{prop:digraphStruct} and Corollary \ref{cor:l3.4}.
\end{proof}

\begin{example}
We note  that the given structure of digraph is not well defined for the arbitrary graph $G$ with a given burning sequence. Indeed, consider a circle $C_3$  with the vertices $1,2,3$ and all possible edges. The burning sequence $S=(1)$ does not allow to define a directed graph in the given way, since $\lambda(2)=\lambda(3)$, but $2,3\in T_1$. 
\end{example}

\section{ Strong configuration burning space }\label{S7}

A set $\mathcal B(G)$ of all burnings of any graph $G$ and a configuration space   $\Delta_G$  which is a simplicial complex associated with the set  $\mathcal B(G)$ were defined in  \cite{MMBurning2025}.  In this section   we define \emph{strong configuration burning space}  and corresponding \emph{strong burning homology groups} using a set of all  burnings homomorphisms   of a graph  and describe its relation to burning homology.   We use standard definitions of simplicial and graph constructions, see \cite{semi, MiHomotopy, Hatcher, May,  Munkres, Prasolov}. Let us firstly recall the most important concepts.  

\begin{definition} \label{def:simComp} 
A \emph{simplicial complex} $\Delta =(V,F)$ consists of a set $V$ of \emph{vertices} and a set $F$
of finite non-empty subsets of $V$ which are called \emph{simplexes}.  A simplex  $\sigma \in F$ containing $q+1 \, (q\geq 0) $ elements    is called a
$q$-\emph{simplex}  and the number $q$ is called the \emph{dimension} of $\sigma$ and we write  ($q=\operatorname{dim} \sigma$).  The set $F$ of simplexes must satisfy the following two properties:

(i) for every vertex $v\in V$  the set $\{v\}\in F$ is a simplex, 

(ii) if $\sigma \in F$ is a simplex then any   non-empty subset $\tau\subset \sigma$ is a simplex which is called  a \emph{face} of $\sigma$.

A simplex $\sigma\in \Delta$ is \emph{maximal} if it is not a face of any another simplex. 

A simplicial complex is $n$-\emph{dimensional} if it contains at least one $n$-dimen\-sional simplex but
no $(n+1)$-dimensional simplexes. 
\end{definition}

\begin{definition}\label{def:subcomplex}
A simplicial complex 
$\Delta_1=(V_1, F_1)$ is a \emph{subcomplex} of a simplicial complex  $\Delta=(V,F)$  if $V_1\subset V$ and $F_1\subset F$.

\end{definition}

A simplicial complex is called \emph{finite}, if it has  finite number of vertices.
In this paper we  consider only finite  simplicial complexes.

\bigskip

Now we come back to the burning theory. Let $G=(V,E)$ be a graph. Denote by   $\mathcal  B(G)$ the set of all burnings of $G$ and by $\mathcal  B_H(G)$ the set of all burning homomorphism of $G$. Since we consider only finite graphs, the set  $\mathcal  B(G)$
consists  of  finite number of pairs $(\lambda_G, S_G)$. Every burning sequence $S_G$ defines a set  $\widehat S_G= \{v_1, \dots, v_k\}\subset ~V$.    It follows from Definition \ref{def:simComp}  that a simplicial complex 
$\Delta=(V,F)$  is  defined  by the set $V$ of vertices and  a subset $F^m\subset F$ of maximal simplexes of $\Delta$.  We  will say that the set of maximal simplexes $F^m$ 
\emph{generates} the set $F$ of simplexes. 
 
Recall the following definition  from  \cite{MMBurning2025}. 
\begin{definition} For a given graph $G=(V,E)$, a  \emph{burning configuration space of $G$} is  a  simplicial complex $\Delta(G)=(V_{\Delta(G)}, F_{\Delta(G)})$ where 
$V_{\Delta(G)}=V$ and $F_{\Delta(G)}$ is generated by the set of all maximal simplexes 
$$
F_{\Delta(G)}^m=\{\widehat S_G\, |\, (\lambda_G, S_G)\in \mathcal B(G)\}.
$$  
\end{definition}

Now we give a new definition which is similar to  this one.

\begin{definition} For a given graph $G=(V,E)$, we  define a \emph{strong burning configuration space of $G$} as a  simplicial complex $\Gamma(G)=(V_{\Gamma(G)}, F_{\Gamma(G)})$ where 
$$
V_{\Gamma(G)} = \{v\in V\, |\, \text{ there exists $\widehat S_G\in \mathcal B_H(G)$\; such that $v\in \widehat S_G$ }\},
$$

and $F_{\Gamma(G)}$ is generated by the set of all maximal simplexes 
$$
F_{\Gamma(G)}^m=\{\widehat S_G\, |\, (\lambda_G, S_G)\ \ \text{where $\lambda_G\in \mathcal B_H(G)$ }\}.
$$  
\end{definition}
Note that the strong burning configuration space as well as set of all burning homomorphism of a graph can be empty, for example, when graph contains an odd cycle \cite[Theorem 3.10]{MMBurning2025}.
Moreover, Example \ref{ex:noHom1} shows, that they can be empty also for trees. From the other hand, the following Proposition is an immediate consequence of Theorem \ref{thm:pathHom}.

\begin{proposition}
For any path graph the corresponding strong burning configuration space is not the empty simplicial complex.
\end{proposition}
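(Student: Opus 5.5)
The argument reduces to exhibiting one burning homomorphism. By the definition of the strong burning configuration space $\Gamma(G)$, its vertex set $V_{\Gamma(G)}$ and its generating set of maximal simplexes $F^m_{\Gamma(G)}$ are both indexed by the elements of $\mathcal B_H(G)$. Hence $\Gamma(G)$ is non-empty as soon as $\mathcal B_H(G)\neq\emptyset$. Indeed, any single $(\lambda_G,S_G)$ with $\lambda_G$ a burning homomorphism yields a non-empty set $\widehat S_G$ of burning sources. This set is a simplex of $\Gamma(G)$, and its elements are vertices of $\Gamma(G)$.

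So I would take a path graph $P=P_n$ and invoke Theorem \ref{thm:pathHom}, which provides an explicit burning sequence $S_P$ whose burning map $\lambda_P$ is a homomorphism. Concretely, we use $S_P=(1,4,7,\dots)$ if $3\nmid n$ and $S_P=(2,5,8,\dots)$ if $3\mid n$. I would note that $S_P$ is non-empty by Definition \ref{def::burningSeq}, since a burning sequence is by definition a non-empty ordered set. Therefore $\widehat S_P\ne\emptyset$, and $\widehat S_P\in F^m_{\Gamma(P)}$. In particular, every source in $\widehat S_P$ lies in $V_{\Gamma(P)}$, so $\Gamma(P)$ has at least one vertex and one simplex.

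The only point needing care is the degenerate case $n=1$. Theorem \ref{thm:pathHom} is stated for all $n$, but with a single vertex there are no edges, so the homomorphism condition holds vacuously. The sequence $(1)$ then gives $\mathbf T=1=n$, which is a burning whose map is trivially a homomorphism. This is slightly at odds with Lemma \ref{lem::homT=n+1}, because the proof of that lemma needs a neighbour of $v_n$. I would either remark that the statement is understood for $n\ge2$, as in Theorem \ref{thm::bounds} and Example \ref{ex::pathEndtime}, or observe that the vacuous case still produces the simplex $\{1\}$. Apart from this remark I expect no real obstacle, since the proposition is an immediate consequence of Theorem \ref{thm:pathHom}.
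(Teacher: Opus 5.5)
Your proof is correct and follows the paper's argument: the paper presents the proposition as an immediate consequence of Theorem~\ref{thm:pathHom}, and you additionally make explicit that a burning sequence is non-empty, so its source set $\widehat S_P$ is a simplex of $\Gamma(P_n)$. Your side remark on $n=1$ is also accurate: the sequence $(1)$ gives a vacuous homomorphism with $\mathbf T=1=n$, so Lemma~\ref{lem::homT=n+1} tacitly needs $v_n$ to have a neighbour. This does not affect the proposition.
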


\begin{proposition}\label{p5.7} Let $G$ be a graph with a non-empty  strong burning configuration space $\Gamma(G)$. Then  we have an inclusion 
$\Gamma(G)\to \Delta(G)$ of simplicial complexes. 
\end{proposition}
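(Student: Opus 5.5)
The plan is to check directly the two conditions in Definition \ref{def:subcomplex}, namely $V_{\Gamma(G)}\subset V_{\Delta(G)}$ and $F_{\Gamma(G)}\subset F_{\Delta(G)}$. The key observation is that a burning homomorphism is a special kind of burning, so $\mathcal B_H(G)\subset \mathcal B(G)$. Indeed, by Definition \ref{d3.6} a burning homomorphism is just the burning map $\lambda(\mathbf B^{\mathbf T}(G))$ of a burning $(\lambda_G,S_G)$ that happens to be a homomorphism. Thus every pair $(\lambda_G,S_G)$ with $\lambda_G\in\mathcal B_H(G)$ is an element of $\mathcal B(G)$.

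First, the vertex inclusion. By definition $V_{\Delta(G)}=V$. The set $V_{\Gamma(G)}$ consists of vertices $v\in V$ lying in some $\widehat S_G$, so it is contained in $V$.

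Second, the simplex inclusion. Any simplex $\tau\in F_{\Gamma(G)}$ is a non-empty subset of some generating simplex $\widehat S_G$ with $(\lambda_G,S_G)$ a burning homomorphism. Since this pair lies in $\mathcal B(G)$, the set $\widehat S_G$ belongs to the generating family $F^m_{\Delta(G)}$, so $\widehat S_G\in F_{\Delta(G)}$. By property (ii) of Definition \ref{def:simComp}, every non-empty face $\tau\subset\widehat S_G$ is then also in $F_{\Delta(G)}$. Hence $F_{\Gamma(G)}\subset F_{\Delta(G)}$.

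The inclusion map itself is therefore the identity on $V_{\Gamma(G)}$, and it sends simplexes to simplexes. The hypothesis that $\Gamma(G)$ is non-empty is used only so that $\Gamma(G)$ is a genuine simplicial complex in the sense of Definition \ref{def:simComp}, whose vertex set must be non-empty.

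The only point needing care is a remark rather than an obstacle. A simplex that is maximal in $\Gamma(G)$ need not be maximal in $\Delta(G)$, since some $\widehat S_G$ coming from a homomorphism may be a proper subset of the source set of another burning. This causes no difficulty, because the argument only needs each $\widehat S_G$ to lie in $F_{\Delta(G)}$, not to be maximal there.
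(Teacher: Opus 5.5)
Your proof is correct and follows the paper's approach. The paper's proof says only that the inclusion ``follows from definitions of configuration spaces,'' and you have spelled this out: $\mathcal B_H(G)\subset\mathcal B(G)$ gives $V_{\Gamma(G)}\subset V$, and it places every generating simplex of $\Gamma(G)$, and hence each of its faces, in $F_{\Delta(G)}$. You are also right that these generators need only belong to $F_{\Delta(G)}$, not be maximal there.
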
  
\begin{proof} Follows from definitions of configuration spaces. 
\end{proof}

\begin{example}\label{e7.7} In this example we present strong configuration spaces 
and  configuration spaces of path graphs $P_n$ for $2\leq n \leq 5$, see Table \ref{tablicaPn}.
\begin{table}[H]
\begin{tabular}{|c|c|c|c|}
\hline
n&Graph&$\Gamma$&$\Delta$\\
\hline
2&\begin{tikzpicture}
\node (1) at (0,0) {$\bullet$};
\node (1b) at (0,-0.35) {$1$};

\node (2) at (1,0) {$\bullet$};
\node (2b) at (1,-0.35) {$2$}; 
\draw (1) edge[ color=black!120, thick, -] (2);\end{tikzpicture}&\begin{tikzpicture}
\node (1) at (0,0) {$\bullet$};
\node (1b) at (0,-0.35) {$1$};

\node (2) at (1,0) {$\bullet$};
\node (2b) at (1,-0.35) {$2$};\end{tikzpicture}&
\begin{tikzpicture}
\node (1) at (0,0) {$\bullet$};
\node (1b) at (0,-0.35) {$1$};

\node (2) at (1,0) {$\bullet$};
\node (2b) at (1,-0.35) {$2$};\end{tikzpicture}\\

\hline
3&\begin{tikzpicture}
\node (1) at (0,0) {$\bullet$};
\node (1b) at (0,-0.35) {$1$};

\node (2) at (1,0) {$\bullet$};
\node (2b) at (1,-0.35) {$2$}; 
\node (3) at (2,0) {$\bullet$};
\node (3b) at (2,-0.35) {$3$};
\draw (3) edge[ color=black!120, thick, -] (2);
\draw (1) edge[ color=black!120, thick, -] (2);\end{tikzpicture}&\begin{tikzpicture}

\node (2) at (1,0) {$\bullet$};
\node (2b) at (1,-0.35) {$2$};\end{tikzpicture}&
\begin{tikzpicture}
\node (1) at (0,0) {$\bullet$};
\node (1b) at (0,-0.35) {$1$};

\node (2) at (1,0) {$\bullet$};
\node (2b) at (1,-0.35) {$3$};
\node (3) at (0.5,-1) {$\bullet$};
\node (3b) at (0.5,-1.35) {$2$};

\draw (1) edge[ color=black!120, thick, -] (2);\end{tikzpicture}\\

\hline
4&\begin{tikzpicture}
\node (1) at (0,0) {$\bullet$};
\node (1b) at (0,-0.35) {$1$};

\node (2) at (1,0) {$\bullet$};
\node (2b) at (1,-0.35) {$2$}; 
\node (3) at (2,0) {$\bullet$};
\node (3b) at (2,-0.35) {$3$};

\node (4) at (3,0) {$\bullet$};
\node (4b) at (3,-0.35) {$4$};
\draw (3) edge[ color=black!120, thick, -] (2);
\draw (3) edge[ color=black!120, thick, -] (4);
\draw (1) edge[ color=black!120, thick, -] (2);\end{tikzpicture}&\begin{tikzpicture}

\node (1) at (0,0) {$\bullet$};
\node (1b) at (0,-0.35) {$1$}; 

\node (2) at (1,0) {$\bullet$};
\node (2b) at (1,-0.35) {$4$};
\draw (1) edge[ color=black!120, thick, -] (2);\end{tikzpicture}&
\begin{tikzpicture}
\node (1) at (0,0) {$\bullet$};
\node (1b) at (0,0.35) {$1$};

\node (4) at (1,0) {$\bullet$};
\node (4b) at (1,0.35) {$4$};
\node (3) at (0,-1) {$\bullet$};
\node (3b) at (0,-1.35) {$3$};
\node (2) at (1,-1) {$\bullet$};
\node (2b) at (1,-1.35) {$2$};

\draw (1) edge[ color=black!120, thick, -] (4);
\draw (1) edge[ color=black!120, thick, -] (3);
\draw (2) edge[ color=black!120, thick, -] (4);\end{tikzpicture}\\
\hline

5&\begin{tikzpicture}
\node (1) at (0,0) {$\bullet$};
\node (1b) at (0,-0.35) {$1$};

\node (2) at (1,0) {$\bullet$};
\node (2b) at (1,-0.35) {$2$}; 
\node (3) at (2,0) {$\bullet$};
\node (3b) at (2,-0.35) {$3$};

\node (4) at (3,0) {$\bullet$};
\node (4b) at (3,-0.35) {$4$};

\node (5) at (4,0) {$\bullet$};
\node (5b) at (4,-0.35) {$5$};
\draw (3) edge[ color=black!120, thick, -] (2);
\draw (3) edge[ color=black!120, thick, -] (4);
\draw (4) edge[ color=black!120, thick, -] (5);
\draw (1) edge[ color=black!120, thick, -] (2);\end{tikzpicture}&\begin{tikzpicture}

\node (1) at (0,0) {$\bullet$};
\node (1b) at (0,-0.35) {$1$}; 

\node (4) at (1,0) {$\bullet$};
\node (4b) at (1,-0.35) {$4$};
\draw (1) edge[ color=black!120, thick, -] (4);
\node (2) at (0,-1) {$\bullet$};
\node (2b) at (0,-1.35) {$2$}; 

\node (3) at (1,-1) {$\bullet$};
\node (3b) at (1,-1.35) {$5$};
\draw (3) edge[ color=black!120, thick, -] (2);

\end{tikzpicture}&

\begin{tikzpicture}[scale=0.3]
\coordinate (A1) at (0,0);
\coordinate (A2) at (2.5,3.4);
\coordinate (A3) at (5,0);
\node (A10) at (-0.1,0) {$\bullet$};
\node (A20) at (2.5,3.44) {$\bullet$};
\node (A30) at (5.05,0) {$\bullet$};
\node (100) at  (-0.25, 0) {$1\,\,\, \, \,\,$};
\node (102) at (2.5,4.43) {$3$};
\node (101) at (5.18,0) {$\,\,\,\, \, 5$};
\definecolor{c1}{RGB}{200,200,200}
\definecolor{c2}{RGB}{200,200,200}
\definecolor{c3}{RGB}{200,200,200}

\draw (A1) -- (A2) -- (A3) -- (A1) -- cycle;

\shade [left color=c1,right color=c2] (A1) -- (A2) -- (A3) -- (A1) -- cycle;
\node (B0) at (-0.1,-3.5) {$\bullet$};
\node (B1) at (-1,-3.5) {$4$};
\node (B2) at (5.05,-3.5) {$\bullet$};
\node (B3) at (5.7,-3.5) {$2$};

\draw (A10) edge[ color=black!120, thick, ] (B0);
\draw (B0) edge[ color=black!120, thick, ] (B2);
\draw (B2) edge[ color=black!120, thick, ] (A30);
\end{tikzpicture}\\

\hline

\end{tabular}
\caption{Strong configuration spaces
and  configuration spaces of path graphs $P_n$ for $2\leq n \leq 5$}.
\label{tablicaPn}
\end{table}
\end{example}

Let $R$ be a unitary commutative ring of coefficients. Recall that the burning homology $H_*(\mathcal B(G),R)$ of a graph $G$ is the simplicial homology of  simplicial complex $\Delta(G)$ \cite{MMBurning2025}. 
Now we define strong burning homology of a graph and describe its relation to burning homology. 

\begin{definition} \rm The \emph{strong burning homology} of a graph $G$ is the simplicial homology of  simplicial complex $\Gamma(G)$. The corresponding homology groups are denoted by $H_*(\mathcal B_H(G),R)$. 
\end{definition} 

  Let $G$ be a graph with a non-empty strong burning configuartion space. It follows immediately from Proposition \ref{p5.7}  that for $n\geq 0$ there are 
 homomorphisms 
$$
H_n(\mathcal B_H(G),R)\to H_n(\mathcal B(G),R) 
$$ 
of homology groups. 

We note that the strong homology groups can be essentially non-trivial already for a digraph with a small number of vertices. 
\begin{example}

(i) A strong burning configuration space $\Gamma(Y_8)$ of the graph $Y_8$, defined in Example
\ref{ex::T15},   is shown in Figure \ref{fig::Y8}.

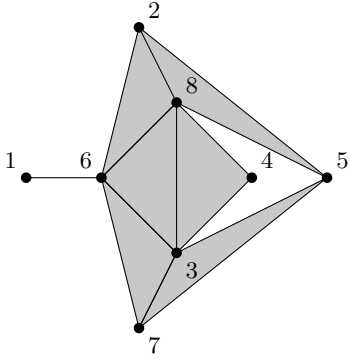
\begin{figure}[H]
\centering
\begin{tikzpicture}[scale=1,
    vertex/.style={circle,fill=black!120,inner sep=1.5pt},
    every node/.style={font=\small}
]

\coordinate (v1) at (0,0);
\coordinate (v2) at (1.5,2);
\coordinate (v3) at (2,-1);
\coordinate (v4) at (3,0);
\coordinate (v5) at (4,0);
\coordinate (v6) at (1,0);
\coordinate (v7) at (1.5,-2);
\coordinate (v8) at (2,1);
\definecolor{c1}{RGB}{200,200,200}

\fill[c1] (v2)--(v6)--(v8)--cycle;
\fill[c1] (v2)--(v5)--(v8)--cycle;
\fill[c1] (v3)--(v6)--(v8)--cycle;
\fill[c1] (v3)--(v4)--(v8)--cycle;
\fill[c1] (v3)--(v6)--(v7)--cycle;
\fill[c1] (v3)--(v7)--(v5)--cycle;

\draw (v1)--(v6);
\draw (v2)--(v6)--(v8)--(v2);
\draw (v2)--(v5)--(v8);
\draw (v3)--(v6)--(v8)--(v3);
\draw (v3)--(v4)--(v8);
\draw (v3)--(v6)--(v7)--(v3);
\draw (v3)--(v7)--(v5)--(v3);

\foreach \i in {2,4,5, 8}
{
    \fill (v\i) circle (2pt);
    \node[above right] at (v\i) {\i};
}
\fill (v1) circle (2pt);
\node[above left] at (v1) {1};

\fill (v3) circle (2pt);
\node[below right] at (v3) {3};

\fill (v6) circle (2pt);
\node[above left] at (v6) {6};

\fill (v7) circle (2pt);
\node[below right] at (v7) {7};

\end{tikzpicture}
\caption{Strong configuration space of the graph $Y_8$}
\label{fig::Y8}
\end{figure}
Hence in this case we have 
$
H_i(\mathcal B_H(Y_8),\mathbb Z)=\begin{cases} \mathbb Z,  & \text{for} \ i=0,1, \\ 
0, & \text{for  other} \ i. 
\end{cases}
$

\bigskip
(ii) The maximal simplexes of the strong burning configuration space $\Gamma(P_9)$  of the path graph $P_9$  are the following: 
$$
\{1,4,7\},  \{1,4,8\}, \{1,4,9\}, \{1,5,8\}, \{1,6,9\}, \{2,5,8\},
\{2,5,9\}, \{2,6,9\}, \{3,6,9\}.
$$
 We can write down geometrically this configuration space 
in the form 
\begin{equation}\label{7.11}
\Gamma(P_9)= \Gamma_1\cup \Delta_1\cup \Delta_2
\end{equation} 
where  $\Delta_1=\langle 1,4,9\rangle, \Delta_2=\langle 3,6,9\rangle$
are two dimensional simplexes with the corresponding vertex sets and 
the simplicial set  $\Gamma_1$   is shown in Figure \ref{fig::P9}  
\begin{figure}[H]
\centering
\begin{tikzpicture}[scale=1.4,
    vertex/.style={circle,fill=black!120,inner sep=1.5pt},
    every node/.style={font=\small}
]

\coordinate (v1) at (0,0);
\coordinate (v2) at (0,2);
\coordinate (v3) at (-2,4);
\coordinate (v4) at (1.25,0.5);
\coordinate (v5) at (0.5,1);
\coordinate (v6) at (-1,1);
\coordinate (v7) at (1,0);
\coordinate (v8) at (1,1);
\coordinate (v9) at (-0.5,1);
\definecolor{c1}{RGB}{200,200,200}

\fill[c1] (v1)--(v4)--(v7)--cycle;
\fill[c1] (v1)--(v4)--(v8)--cycle;
\fill[c1] (v1)--(v5)--(v8)--cycle;
\fill[c1] (v1)--(v6)--(v9)--cycle;
\fill[c1] (v2)--(v6)--(v9)--cycle;
\fill[c1] (v2)--(v5)--(v9)--cycle;
\fill[c1] (v2)--(v5)--(v8)--cycle;

\draw (v1)--(v4)--(v7)--(v1);
\draw (v1)--(v4)--(v8)--(v1);
\draw (v1)--(v5)--(v8);
\draw (v1)--(v6)--(v9)--(v1);
\draw (v2)--(v6)--(v9)--(v2);
\draw (v2)--(v5)--(v9);
\draw (v2)--(v5)--(v8)--(v2);

\foreach \i in {2,4, 8}
{
    \fill (v\i) circle (2pt);
    \node[above right] at (v\i) {\i};
}
\fill (v7) circle (2pt);
\node[below right] at (v7) {7};

\fill (v1) circle (2pt);
\node[below right] at (v1) {1};

\fill (v6) circle (2pt);
\node[below left] at (v6) {6};

\fill (v9) circle (2pt);
\node[above left] at (v9) {9};

\fill (v5) circle (2pt);
\node[above right] at (v5) {5};

\end{tikzpicture}
\caption{The subcomplex $\Gamma_1$ of the configuration space 
$\Gamma(P_9)$}
\label{fig::P9}
\end{figure}
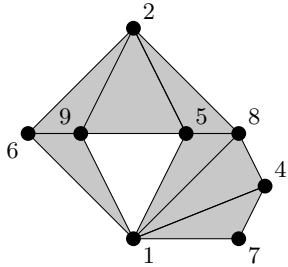
It follows immediately from (\ref{7.11}) and  Figure \ref{fig::P9}  that there is a deformation retraction of $\Gamma(P_9)$ to $\Gamma_1$. Hence 
$
H_i(\mathcal B_H(P_9),\mathbb Z)=\begin{cases} \mathbb Z,  & \text{for} \ i=0,1, \\ 
0, & \text{for  other} \ i. 
\end{cases}
$

\end{example} 

\begin{remark} It follows from Example \ref{e7.7} that  strong homology is not functorial already on the category of path graph and inclusion maps. But we  note, that  for any inclusion $\tau\colon T\to Q$ of trees  there is a  diagram 
of inclusion of configuration spaces
$$
\begin{matrix}
\Gamma(T) &\longrightarrow & \Delta(T)\\
&& \downarrow \\
\Gamma(Q) &\longrightarrow & \Delta(Q)\\
\end{matrix}
$$
which induces the corresponding diagram of homology groups and homomorphisms. 
\end{remark}

Yuri Muranov: \emph{Faculty of Mathematics and Computer Science, University
of Warmia and Mazury in Olsztyn, Sloneczna 54 Street, 10-710 Olsztyn, Poland. }

e-mail: yury.muranov@uwm.edu.pl
\smallskip

Anna Muranova: \emph{Faculty of Mathematics and Computer Science, University
of Warmia and Mazury in Olsztyn, Sloneczna 54 Street, 10-710 Olsztyn, Poland. }

e-mail: anna.muranova@uwm.edu.pl


\begin{thebibliography}{10}

\bibitem{Bonato}
A.~Bonato, \emph{A survey of graph burning}, arXiv:2009.10642v1 [math.CO]
  (2020), 1--11.

\bibitem{Burning_2014}
A.~Bonato, J.~Janssen, and E.~Roshanbin, \emph{Burning a graph as a model of
  social contagon}, Algorithms and Models for the Web Graph, 11th International
  Workshop, WAW, Proceedings (2014), 13--22.

\bibitem{Bonato_0}
A.~Bonato, J.~Janssen, and E.~Roshanbin, \emph{How to burn a graph}, Internet Mathematics \textbf{12} (2016),
  85--10.

\bibitem{Gary}
Gary Chartrand, Linda Lesniak, and Ping Zhang, \emph{Graphs and digraphs}, CRC
  Press: Boca Raton, Florida, United States, 2011.

\bibitem{semi}
Samuel Eilenberg and J.~A. Zilber, \emph{Semi-simplicial complexes and singular
  homology},  \textbf{51} (1950), 499--513.

\bibitem{MiHomotopy}
Alexander Grigor'yan, Yong Lin, Yuri Muranov, and Shing-Tung Yau,
  \emph{Homotopy theory for digraphs}, Pure and Applied Mathematics Quarterly
  \textbf{10} (2014), 619--674.

\bibitem{Mi3}
Alexander Grigor'yan, Yuri Muranov, and Shing-Tung Yau, \emph{Graphs associated
  with simplicial complexes}, Homology, Homotopy, and Applications \textbf{16}
  (2014), 295--311.

\bibitem{MiHH}
Alexander Grigor'yan, Yuri Muranov, and Shing-Tung Yau, \emph{On a cohomology of digraphs and {H}ochschild cohomology},
  Journal of Homotopy and Related Structures (2015), 209--230.

\bibitem{Hatcher}
Allen Hatcher, \emph{Algebraic {T}opology}, Cambridge University Press, 2002.

\bibitem{May}
J.~Peter May, \emph{Simplicial objects in algebraic topology}, The University
  of Chicago Press, Chicago, 1967.

\bibitem{Munkres}
R.~Munkres, James, \emph{Elements of algebraic topology}, Addison-Wesley
  Publishing Company, 1984.

\bibitem{MMBurning2025}
Yuri Muranov and Anna Muranova, \emph{Homology of graph burnings}, Topology and
  its Applications \textbf{373} (2025), 109486.

\bibitem{Prasolov}
V.~V. Prasolov, \emph{Elements of homology theory}, Graduate Studies in
  Mathematics 81. Providence, RI: American Mathematical Society (AMS), 2007.

\end{thebibliography}
\end{document}